\newtheorem{theorem}{Theorem}[section]
\newtheorem{lemma}[theorem]{Lemma}
\newtheorem{prop}[theorem]{Proposition}
\newtheorem{cor}[theorem]{Corollary}
\newtheorem{question}[theorem]{Question}
\numberwithin{equation}{section}
\theoremstyle{definition}
\newtheorem{remark}[theorem]{Remark}
\newtheorem{example}[theorem]{Example}
\newcommand{\on}{\operatorname}
\renewcommand{\d}{\partial}
\newcommand{\E}{\mathcal{E}}
\newcommand{\I}{\mathcal{I}}\newcommand{\C}{\mathcal{C}}
\newcommand{\F}{\mathcal{F}}\newcommand{\cS}{\mathcal{S}}
\newcommand{\UU}{\mathcal{U}}
\newcommand{\PP}{\mathcal{P}}
\newcommand{\OO}{\mathcal{O}}
\newcommand{\CC}{\mathbb{C}}
\newcommand{\BB}{\mathcal{B}}
\newcommand{\PPP}{\mathbb{P}}
\newcommand{\Z}{\mathbb{Z}}
\newcommand{\N}{\mathbb{N}}
\newcommand{\QQ}{\mathbb{Q}}
\newcommand{\M}{\operatorname{Mod}}
\newcommand{\cptwobar}{\overline{\CC\textup{P}}\,\!^2}
\renewcommand{\d}{\partial}
\newcommand{\rk}{\on{rk}}
\DeclareMathOperator{\Hom}{Hom}
\title[Stein  fillings vs Milnor fibers]{Stein  fillings vs. Milnor fibers}
\author{R.\,\.{I}nan\c{c} Baykur}
\address{Department of Mathematics and Statistics, University of Massachusetts, Amherst, MA 01003, USA}
\email{inanc.baykur@umass.edu}
\author{Andr\'as N\'emethi}
\address{HUN-REN Alfr\'ed R\'enyi Institute of Mathematics,
Re\'altanoda utca 13-15, H-1053, Budapest, Hungary \newline
  \hspace*{4mm} ELTE - Faculty of Science, Dept. of Geometry,
  P\'azm\'any P\'eter s\'et\'any 1/A, 1117 Budapest, Hungary \newline 
\hspace*{4mm}
   BBU - Babe\c{s}-Bolyai Univ., Str, M. Kog\u{a}lniceanu 1, 400084 
Cluj-Napoca, Romania
    \newline \hspace*{4mm}
BCAM - Basque Center for Applied Math.,
Mazarredo, 14 E48009 Bilbao, Basque Country – Spain}
\email{nemethi@renyi.hu}
\author{Olga Plamenevskaya}
\address{Department of Mathematics, Stony Brook University, Stony Brook, NY 11794, USA}
\email{olga@math.stonybrook.edu}
\begin{document}

\begin{abstract} 
Given a link of a normal surface singularity with its canonical contact structure,  we compare the collection of its Stein fillings to its Milnor fillings (that is, Milnor fibers of possible smoothings). We prove that, unlike Stein fillings, Milnor fillings of a given link have bounded topology; for links of sandwiched singularities, we further establish that there are only finitely many Milnor fillings. We discuss some other obstructions for a Stein filling to be represented by a Milnor fiber, and for various types of singularities, including simple classes like cusps and triangle singularities, we produce Stein fillings that do not come from  Milnor fibers or resolutions.
\end{abstract}

\maketitle

\section{Introduction}

Let $(X,0) \subset \CC^N$ be a complex surface germ with an isolated normal singular point at the origin. The link 
$Y$ of $(X, 0)$ is the smooth $3$-manifold $Y = X \cap \,S_r^{2N-1}$ obtained by intersecting $X$ with a sphere of a small radius $r>0$ centered at 0. The link $Y$ carries a canonical contact structure given by the distribution of complex tangencies to $Y$. The {\em contact link} $(Y, \xi)$ is independent of the choice of $r$, up to contactomorphism. The link $(Y, \xi)$ can also be described as the convex boundary of the negative definite plumbing of symplectic disc bundles over surfaces, given by 
a neighborhood of the exceptional divisor in any resolution of $(X, 0)$. 

In this article, we examine symplectic and Stein fillings of $(Y, \xi)$ and their relation to smoothings of surface singularities with the given boundary link.  Suppose that $\lambda\colon (X, 0) \to (T, 0)$ is a 1-parameter smoothing, so that $X_t:= \lambda^{-1}(t)$ is smooth for $t\neq 0$. It is known that the diffeomorphism type and the Stein homotopy type of $X_t$ is independent of $t$; we refer to $X_t$ as the \emph{Milnor fiber associated to the smoothing $\lambda$}. Intersecting with a small ball centered at the origin, we can also think of a compact version of the Milnor fiber: this is a compact Stein domain $W$ whose contact boundary is given by $(Y, \xi)$.  These, together with the minimal resolution of the singularity, whose symplectic structure can be deformed into a Stein structure by \cite{BO}, can be thought of as Stein fillings of algebro--geometric origin.  We will refer to them as {\em Milnor fillings}, even though  the term is often used for singular surface germs in the literature. It is interesting to compare Milnor fillings 
to the more general Stein fillings of $(Y, \xi)$. These two collections of fillings are known to coincide in certain simple cases, such as lens spaces (arising as links of cyclic quotient singularities) \cite{lisca:lens_fillings, NPP-cycl}, and more generally, links of quotient singularities \cite{PPSU, Bhu-Ono}, and links of simple elliptic  singularities \cite{OhtaOno1, OhtaOno2}. However, for general singularities, they can be different \cite{Akh-Ozbagci1, Akh-Ozbagci2, PS}. 

As a first ground of comparison, we prove that the Milnor fillings of a fixed link have bounded topology:

\begin{theorem} \label{bounds} 
Let $(Y, \xi)$ be the contact link of a normal surface singularity $(X,0)$. Then the Euler characteristic, the signature, $b_2^+$ and $b_2^-$ of all Milnor fillings of $(Y, \xi)$ are bounded by a constant that depends only on the link.
\end{theorem}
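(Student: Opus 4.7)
\emph{The plan} is to bound $b_1(F)$, $\chi(F)$ and $\sigma(F)$ separately; since a Milnor filling $F$ is a $4$-dimensional Stein domain we have $b_3(F)=b_4(F)=0$, so these three bounds together control $b_2(F)=\chi(F)+b_1(F)-1$ and $b_2^\pm(F)=\tfrac{1}{2}(b_2(F)\pm\sigma(F))$. The bound on $b_1$ is formal and works for any Stein filling of $(Y,\xi)$: the Stein property forces a handle decomposition with handles of index at most $2$, so $F$ has the homotopy type of a $2$-complex. Hence $H^3(F;\QQ)=0$, and Lefschetz duality gives $H_1(F,Y;\QQ)\cong H^3(F;\QQ)=0$. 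The long exact sequence of the pair $(F,Y)$ then forces $H_1(Y;\QQ)\twoheadrightarrow H_1(F;\QQ)$, so $b_1(F)\le b_1(Y)$.

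For $\chi(F)$ and $\sigma(F)$ I would invoke the classical Laufer--Durfee--Steenbrink--Wahl framework for Milnor fibers of smoothings of normal surface singularities. Laufer's formula expresses $\mu=b_2(F)$ as a sum of link-topological quantities (involving $K^2$ and the Euler characteristic of the exceptional divisor of the minimal good resolution, both determined by $Y$) plus a correction involving the geometric genus $p_g$ of $(X,0)$; the Steenbrink--Durfee signature formula has the analogous form for $\sigma(F)$. Thus once one has a uniform bound on $p_g$ depending only on $Y$, both $\chi(F)$ and $\sigma(F)$ are bounded, and the promised bounds on $b_2^\pm(F)$ follow.

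\emph{The main obstacle} is therefore to bound $p_g$ in terms of the link alone. The geometric genus is not a topological invariant of $Y$---two analytic structures supported on the same resolution graph may have different $p_g$---but one expects it to be bounded above by a combinatorial quantity of the resolution graph. In the Gorenstein case this can be extracted from Wahl-type or Seiberg--Witten-type topological upper bounds on $p_g$; more generally, upper semicontinuity of $p_g$ on the (finite-dimensional) moduli of analytic structures with fixed topological resolution type should yield a uniform bound depending only on $Y$. Combining such a bound with the Laufer- and Steenbrink-type formulas completes the proof. I expect the bulk of the work, and the main technical difficulty, to lie in this last step, since precisely here the nontopological analytic invariants of the singularity must be tamed purely in terms of the link.
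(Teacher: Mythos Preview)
Your approach is close in spirit to the paper's, and the ingredients you identify---the bound $b_1(F)\le b_1(Y)$, the Durfee--Laufer--Steenbrink--Wahl formulas, and a topological upper bound on $p_g$---are exactly the ones used. But there is a genuine gap.

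The formula you invoke, expressing $\mu=b_2(F)$ as $12p_g+Z_K^2+|\mathcal V|-b_1(Y)$ (and similarly $\sigma(F)$ alone in terms of $p_g$ and resolution data), is Laufer's formula and holds \emph{only in the Gorenstein case}. For an arbitrary normal surface singularity the Durfee--Steenbrink relations give you only
\[
2p_g=b_2^0(F)+b_2^+(F),\qquad 4p_g=\mu(F)+\sigma(F)+b_1(Y).
\]
With a topological bound on $p_g$ these control $b_2^+(F)$ and the \emph{sum} $\mu(F)+\sigma(F)$, but not $\mu$ and $\sigma$ separately: nothing so far prevents $b_2^-(F)$ from being arbitrarily large (with $b_2^+$ staying bounded and $\sigma$ going to $-\infty$ while $\mu+\sigma$ stays fixed). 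Different non-Gorenstein smoothings really can have different Milnor numbers, so the missing bound is not a formality.

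The paper closes this gap with an additional, gauge-theoretic input: Stipsicz's uniform lower bound on $2\chi(W)+3\sigma(W)$ for \emph{all} Stein fillings of a fixed $(Y,\xi)$. Writing $2\chi+3\sigma=5b_2^+-b_2^-+2b_2^0-2b_1+2$, the upper bounds on $b_2^+$, $b_2^0$, $b_1$ together with Stipsicz's lower bound force $b_2^-$ to be bounded above. Without this (or some substitute) your argument only works for Gorenstein singularities.

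A minor point: your formula $b_2^\pm=\tfrac12(b_2\pm\sigma)$ ignores the radical $b_2^0$ of the intersection form, which for Milnor fibers equals $b_1(Y)$ and need not vanish. This does not break the logic once everything else is bounded, but you should keep track of it.
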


\noindent By contrast, this statement is manifestly false for general Stein fillings of links; monodromy techniques as in~\cite{baykur-VHM1, baykur-VHM2, BMVHM, DKP} produce collections of Stein fillings with unbounded topology for many non-rational singularities---we will come back to this point shortly.

It is important to note that in Theorem~\ref{bounds}, we fix the link $(Y, \xi)$, which amounts to fixing only the {\em topological} type of the surface singularity. A given topological type may support infinitely many {\em analytic types} $(X, 0)$. In fact, this is almost always the case: topological types that support a unique (resp. finitely many) analytic structures satisfy very restrictive constraints on their resolution graphs~\cite{Lauf}. These are called {\em taut} (resp. {\em pseudotaut}); most singularities are not (pseudo)taut.
For a fixed analytic type $(X, 0)$ there are only finitely many Milnor fibers, but different analytic types may produce non-diffeomorphic Milnor fibers. By~\cite{CNPP}, all these Milnor fibers induce the unique Milnor-fillable contact structure $\xi$ on the link, which in fact can be identified as the boundary of the plumbing of symplectic disk bundles given by the resolution \cite{GaySt}.  The resulting collection of Milnor fillings may a priori be infinite.  We do not know if an infinitude of Milnor fillings can actually exist, but for \emph{sandwiched singularities}, an important subclass of rational singularities, we are able to prove the finiteness of Milnor fillings, sharpening Theorem~\ref{bounds} for this class. Note that most  sandwiched singularities are not (pseudo)taut. 

\begin{theorem} \label{thm:sandwich-finite} 
The contact link of a sandwiched surface singularity admits only finitely many Milnor fillings, up to diffeomorphism.
\end{theorem}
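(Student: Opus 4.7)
The plan is to combine the bounded topology from Theorem~\ref{bounds} with the combinatorial description of smoothings of sandwiched singularities via the de Jong--van Straten theory of picture deformations.

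First, observe that Theorem~\ref{bounds} provides a uniform bound $b_2(W) \le N$ for every Milnor filling $W$ of $(Y,\xi)$, depending only on the link. This bound holds across Milnor fibers arising from \emph{all} analytic representatives $(X,0)$ of the fixed topological type. For sandwiched singularities, which are rational, one further has $b_2^+(W)=0$ (Steenbrink's mixed Hodge bound together with $p_g=0$), so in fact $b_2(W)=b_2^-(W)$ is bounded.

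The second ingredient is de Jong and van Straten's description of smoothings of sandwiched singularities. To each analytic representative $(X,0)$ of the topological type of $(Y,\xi)$ one associates a decorated plane curve germ $C=C_{(X,0)}$ on the ambient smooth surface, and smoothings of $(X,0)$ are in bijection with equivalence classes of \emph{picture deformations} of $C$: one-parameter deformations whose nearby members are plane curves with only ordinary multiple points. Moreover, the Milnor fiber of the corresponding smoothing can be reconstructed directly from the picture by a topological gluing, so that its diffeomorphism type is determined by the combinatorial data of the deformation — the number, multiplicities, and incidence pattern of the ordinary multiple points and curve branches in the generic fiber.

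The heart of the argument is to translate the uniform bound on $b_2(W)$ into a uniform bound on the combinatorial complexity of admissible pictures. The number of branches in the picture, the total multiplicity at multiple points, and the count of multiple points are each controlled by $b_2(W)$ together with the initial decorated curve $C$ (via standard Milnor-number-type formulas that appear in the de Jong--van Straten description). Although different analytic structures $(X,0)$ sharing the link $Y$ give rise to different decorated curves, the resolution graph is fixed by the topological type, which constrains the relevant combinatorial invariants of $C$ to lie in a finite discrete set. Consequently, only finitely many combinatorial types of picture deformations can yield Milnor fibers with $b_2(W)\le N$, and hence only finitely many diffeomorphism types of Milnor fillings can occur.

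The main obstacle will be this last step. One needs to verify, \emph{uniformly across analytic types with the same topological link}, that bounded $b_2$ of the Milnor fiber forces bounded combinatorial complexity of the picture deformation. This requires extracting from the de Jong--van Straten construction an explicit inequality relating $b_2(W)$ to the discrete invariants of the picture, and then certifying that those invariants take only finitely many values as the analytic structure varies — the technical care being that different analytic structures correspond to different decorated curves, while the relevant graph-theoretic constraints that enforce finiteness are encoded only topologically. Once this combinatorial finiteness is secured, the theorem follows immediately from the diffeomorphism-invariance statement in the de Jong--van Straten correspondence.
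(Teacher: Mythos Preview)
Your proposal has a genuine gap at the step you yourself flag as the ``main obstacle,'' and the gap is not merely technical. You assert that the diffeomorphism type of the Milnor fiber is ``determined by the combinatorial data of the deformation --- the number, multiplicities, and incidence pattern of the ordinary multiple points.'' But this is precisely what is \emph{not} known and what the paper has to work hard to circumvent. The paper is explicit about this: ``What is not immediate is that the cardinality of $\alpha^{-1}(I)$ ($I\in\I$) is finite. In principle, we can imagine many different decorated immersed discs with the very same combinatorial incidence matrix.'' Two picture deformations with identical incidence matrices can a priori give non-isotopic immersed disc configurations in the ball, and the de Jong--van Straten recipe (blow up, take the complement) then has no obvious reason to yield diffeomorphic 4-manifolds. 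Nothing in your sketch addresses this.

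Note also that your detour through Theorem~\ref{bounds} is unnecessary: the set of incidence matrices is already finite directly from the constraints $\sum_j v_{ij}(v_{ij}-1)=2\delta(C_i,0)$ and $\sum_j v_{ij}=l_i$, which depend only on the topological type. So the bounded-$b_2$ argument does no work, and the entire content of the theorem lies in the step you have not supplied. The paper's actual proof does \emph{not} establish that the incidence matrix determines the diffeomorphism type. Instead it globalizes the problem to a finite-dimensional parameter space of projective curves, stratifies by the type $\overline{m}$ of the $m$-tuple singularities, proves a uniform Milnor radius over this space via Whitney equisingularity (Teissier's $\mu^*$-constancy), and then uses an Ehresmann-type argument to show that the blown-up family is a $C^\infty$ locally trivial fibration over each stratum. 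Finiteness then follows from the quasiprojectivity of the strata and the Noetherian property of local analytic rings. This is substantially more than a combinatorial bookkeeping argument.
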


The proof is based on a dimensional reduction due to de Jong--van Straten \cite{dJvS}. A sandwiched singularity can be characterized via an associated germ of a singular plane algebraic curve, and the Milnor fibers can be described via appropriate deformations of the curve germ.  
For curves, the parameter space of the analytic structures in a given topological type is finite-dimensional and much better understood. These enable us to prove Theorem~\ref{thm:sandwich-finite} by working with the corresponding parameter space for all analytic structures in the given topological type of the singularity. 

Moving on to Stein fillings of links, we probe the following problem:

\begin{question} \cite{Nem} \label{Milnor=Stein}
For which contact links of normal surface singularities is every Stein filling diffeomorphic to a Milnor filling or the minimal resolution? 
\end{question}

As in~\cite{PS}, we are going to call a Stein filling of a link  \emph{unexpected} if it is not diffeomorphic to any Milnor filling or the minimal resolution---though a takeaway from our work here will be that one should generally ``expect the unexpected'' in this context. 

The examples of unexpected fillings we build in this article will come from monodromy factorizations through a variety of manipulations. Recall that a factorization of the monodromy of the supporting open book on a contact $3$--manifold into a product of positive Dehn twists gives rise to a Lefschetz fibration. Provided all the Dehn twist curves in the factorization are homologically essential curves on the fiber, the \emph{allowable Lefschetz fibration} carries a Stein structure and provides a filling of the contact manifold. 

Theorem~\ref{bounds} implies that whenever the link admits a family of Stein fillings with unbounded topology, all but finitely many of these fillings will be unexpected. Examples of monodromies with arbitrarily long factorizations given in~\cite{baykur-VHM2, BMVHM, DKP}
produce Stein fillings with arbitrarily high Euler characteristic. This approach works for a large family of singularities with dual resolution graphs as in the next theorem. Every normal surface singularity has a {\em good} resolution where all exceptional curves are smooth, and all intersections between exceptional curves occur in transverse double points only.

\begin{theorem}~\label{thm:examples} 
Suppose that $(X, 0)$ is a  normal surface singularity with a good resolution such that:
\begin{enumerate}[label=(\roman*)]
\item for every irreducible exceptional curve $E$, its self-intersection $E \cdot E$ and the number $a(E)$ of intersections with
the other exceptional curves satisfy the inequality 
\begin{equation} \label{goodvert}
a(E) \leq - E \cdot E
\end{equation}
\item there is an exceptional curve $E_0$ of genus $g > 2$ such that $- E_0 \cdot E_0 \leq 2g -4$.
\end{enumerate}
Then the link  $(Y, \xi)$ of $(X, 0)$ has infinitely many Stein fillings that are %  topologically
not homotopy equivalent to any Milnor fibers of any normal surface singularities with the given link. For an infinite family of graphs, there are such Stein fillings with $b_1=0$.
\end{theorem}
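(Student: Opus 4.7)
The plan is to exploit the sharp contrast with Theorem~\ref{bounds}: I would construct, using the cited monodromy-factorization machinery, an infinite family of Stein fillings of $(Y,\xi)$ with unbounded Euler characteristic, and then use the boundedness of Milnor filling topology to conclude that all but finitely many of these cannot be homotopy equivalent to any Milnor fiber of any analytic germ realizing the given topological type.

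First, I would use hypothesis (i) to realize some Stein filling $W_0$ of $(Y,\xi)$ as the total space of a positive allowable Lefschetz fibration $\pi\colon W_0\to D^2$ whose regular fiber is a closed surface of genus $g=g(E_0)$ and in which $E_0$ appears as a section of self-intersection $-E_0\cdot E_0$. The condition $a(E)\le -E\cdot E$ at every exceptional curve is what permits this handle-theoretic realization: each vertex of the resolution graph has enough negative self-intersection to accommodate its intersections with neighboring vertices while still contributing allowable (homologically essential) vanishing cycles. Condition (ii) then supplies the genus condition $g>2$ together with a section of self-intersection $-n$ satisfying $n\le 2g-4$.

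Second, I would apply the monodromy substitution techniques of \cite{baykur-VHM2, BMVHM, DKP}. For a closed surface of genus $g>2$ carrying a section of self-intersection $-n$ with $n\le 2g-4$, these works provide families of relations in the mapping class group (chain, lantern, and daisy type) which convert a fixed positive factorization of the monodromy into an infinite sequence of distinct positive factorizations of unbounded length, each supporting the same contact $3$-manifold. Each factorization yields an allowable Lefschetz fibration and hence a Stein filling $W_k$ of $(Y,\xi)$ with $\chi(W_k)\to\infty$. By Theorem~\ref{bounds}, the Euler characteristics of all Milnor fillings of $(Y,\xi)$ are bounded by a constant depending only on the link, and since $\chi$ is a homotopy invariant of finite CW complexes, infinitely many $W_k$ fail to be homotopy equivalent to any Milnor filling coming from any analytic structure with link $(Y,\xi)$.

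For the $b_1=0$ refinement, I would isolate an infinite subfamily of graphs in which $E_0$ is a central vertex surrounded by rational exceptional curves arranged in a tree, so that the Stein handle decomposition of $W_0$ associated to $\pi$ has enough $2$-handles along curves generating $H_1$ of the fiber to kill $H_1$ of the filling; the substitutions add only extra $2$-handles along essential curves, so $b_1=0$ persists for every $W_k$. The main obstacle is the first step: carrying out the handle-theoretic construction so that the resulting Lefschetz fibration has a closed fiber of the right genus and $E_0$ genuinely appears as a section in the mapping class group sense demanded by \cite{baykur-VHM2, BMVHM, DKP}, while verifying that the contact structure supported by the open book on $\d W_0$ agrees with the canonical contact structure $\xi$ on the link.
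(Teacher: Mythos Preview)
Your high-level strategy---produce Stein fillings of $(Y,\xi)$ with unbounded Euler characteristic and then invoke Theorem~\ref{bounds}---is exactly the paper's plan, and that part is fine. The gap is in the first step, and it is a real one: your description of the Lefschetz fibration is not the correct geometric picture and cannot be made to work as stated.

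A positive allowable Lefschetz fibration over $D^2$ whose total space is a Stein filling has \emph{bordered} fibers; if the regular fiber were closed, the boundary would be a surface bundle over $S^1$, which the link $Y$ is not in general. Likewise, a section of a fibration over $D^2$ is a disk, so the higher-genus curve $E_0$ cannot be a section. What actually happens is the Gay--Mark construction \cite{GayMark}: hypothesis~(i) guarantees that the symplectic plumbing carries a Lefschetz fibration whose page $S$ is assembled by connect-summing, over the edges of the resolution graph, surfaces $S_E$ of genus $g(E)$ with $-E\cdot E-a(E)$ boundary components, and whose monodromy is the product of positive Dehn twists along every boundary component and every connect-sum neck. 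The curve $E_0$ thus contributes a \emph{subsurface} $S_0\cong\Sigma_g^{b}$ of the page, with $b=-E_0\cdot E_0$, and the restriction of the monodromy to $S_0$ is precisely the boundary multitwist. Now hypothesis~(ii) gives $b\le 2g-4$ with $g>2$, and the factorization results of \cite{BMVHM} are stated for the boundary multitwist on $\Sigma_g^b$ in exactly this range; they yield arbitrarily long positive factorizations supported in $S_0$, hence arbitrarily long factorizations of the full monodromy. The compatibility with $\xi$ is part of the Gay--Mark theorem, so that obstacle dissolves once the correct model is in place.

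For the $b_1=0$ claim, the paper does not rely on a graph-combinatorial argument of the type you sketch; it computes $H_1$ directly for the explicit monodromy factorizations of \cite{BMVHM} in the model case where the page is $\Sigma_g^b$ itself (the singularity with a single exceptional curve), checking that the vanishing cycles span $H_1(\Sigma_g^b)$.
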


A deep result of Greuel--Steenbrink~\cite{GreSt} tells that any Milnor fiber $W$ of a normal surface singularity has vanishing first Betti number, which gives a way to argue that certain Stein fillings cannot come from Milnor fibers~\cite{Akh-Ozbagci1}. Many of our unexpected fillings pass the Greuel--Steenbrink test. 

The proof of Theorem~\ref{bounds} is based on the classical formulas of Laufer, Durfee and Wahl \cite{Durf,Laufermu,Steenbrink,Wahl-smooth} expressing the Euler characteristic and signature of Milnor fibers in terms of the geometric genus of the singularity. The geometric genus depends on the analytic structure, and thus in general {\em not} determined by the link, but it satisfies {\em topological} upper bounds that yield the result for all Milnor fillings. These upper bounds, and therefore those of Theorem~\ref{bounds}, can be computed from the resolution graph in specific examples.  We can get explicit unexpected fillings in Theorem~\ref{thm:examples} by producing monodromy factorizations that violate these bounds. A simple family is the hypersurface singularity $\{x^p+y^p+z^p=0\} \subset \mathbb{C}^3$. When $p\geq 5$, its minimal resolution has an exceptional divisor consisting of a single curve $E$ of genus $g=\frac12(p-1)(p-2)>2$, with self-intersection $-b=-p$. The link is the total space of an oriented $S^1$--bundle over $\Sigma_g$ with Euler number $-b$, equipped with the Boothby--Wang contact structure, which is supported by an open book whose page has genus $g$ and $b$ boundary components, and the monodromy is a boundary multitwist. This monodromy admits arbitrarily long factorizations into Dehn twists along homologically essential curves~\cite{BMVHM}, generating an infinite family of unexpected fillings with arbitrarily large topology. 

The singularities in Theorem~\ref{thm:examples} are all non-rational, due to the presence of the higher-genus exceptional curves. For links of \emph{rational singularities}, which are defined as singularities of geometric genus $p_g=0$, the topology of general Stein fillings is  quite restricted. From the topological perspective, these links are characterized as \emph{L-spaces} in the sense of Heegaard Floer or monopole Floer homology \cite{Nem-Lspace}. In this case, all Stein, and more generally, all minimal symplectic fillings are known to be negative definite, with uniformly bounded topology \cite{OSgenus, Stip1}. Nonetheless, unexpected fillings often 
exist, even in the most restricted case of rational singularities with \emph{reduced fundamental cycle}, which can be characterized by having contact links supported by planar open books \cite{NT, GGP}. For links of such singularities, unexpected fillings were constructed in~\cite{PS}.  (Note that in that paper, the unexpected fillings were distinguished from the Milnor fibers only relative to certain boundary data, but one can detect them in the absolute sense by using standard facts on the mapping class group of the link, \cite{PS2}.)  Some of these unexpected fillings and Milnor fibers are simply connected and have negative definite intersection forms of the same rank. 

%The examples in~\cite{PS} were produced via a rather subtle construction related to De Jong--van Straten deformation theory for sandwiched singularities~\cite{dJvS}, which allows to describe both general Stein fillings and Milnor fibers via certain curve arrangements. This construction is not applicable outside of this particular class of singularities, and the obstruction that detects unexpected fillings depends on the subtle properties of the arrangements. 

The next interesting class is 
{\em minimally elliptic singularities}, which are Gorenstein singularities of geometric genus $p_g=1$. If we additionally assume that the link is a rational homology sphere, 
 the links of such singularities are \emph{almost L-spaces} in the terminology of Heegaard Floer or monopole Floer homology, and by the work of Lin~\cite{Lin}, the  \emph{indefinite} Stein fillings of each link all have the same Euler characteristic and signature. Lin's results do not cover the negative-definite case, and it is intriguing that we can find such unexpected fillings distinguished by the rank of their intersection forms, using further constraints on the topology of Milnor fibers due to Laufer.

\begin{theorem} \label{thm:unexpected-min-elliptic}
There are infinite families of minimally elliptic singularities whose links admit Stein fillings that cannot be homotopy equivalent to a Milnor fiber. The examples include non-smoothable cusp singularities and triangle singularities. 
\end{theorem}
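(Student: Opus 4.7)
My plan is to play two facts against each other. First, for a minimally elliptic (hence Gorenstein, $p_g=1$) singularity, Laufer's formula expresses the Milnor number $\mu$ and the signature of the Milnor fiber as explicit combinations of $p_g$, $K^2$ and the Euler number of the exceptional divisor. Since $p_g=1$ is fixed the moment the topological type is fixed, these invariants of \emph{every} Milnor fiber of \emph{every} analytic structure supporting the given link are determined by the link alone. Combined with the Greuel--Steenbrink vanishing $b_1(W)=0$, this pins down the rank $b_2(W)=\mu$ of the intersection form of any Milnor fiber as a specific integer computable from the resolution graph. On the other hand, Lin's theorem fixes $\chi$ and $\sigma$ only for \emph{indefinite} Stein fillings, leaving the rank of \emph{negative-definite} Stein fillings unconstrained. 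So the goal is to construct negative-definite Stein fillings whose second Betti number misses the prescribed Milnor-fiber value.

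For an infinite family of triangle (Dolgachev) singularities $D_{p,q,r}$, I would first evaluate Laufer's $\mu(p,q,r)$ from the star-shaped resolution graph to pin down the forbidden rank. Next, I would present the monodromy of the canonical Milnor open book---either the one arising from the minimal resolution via the construction of~\cite{BO} or from a known smoothing---as a positive factorization, and then apply monodromy substitutions (chain, lantern, daisy relations) in the style of the techniques of~\cite{baykur-VHM2, BMVHM, DKP} used in the proof of Theorem~\ref{thm:examples}. Such a substitution changes the number of Dehn twists, and hence the second Betti number of the associated allowable Lefschetz fibration, producing a Stein filling whose rank differs from $\mu(p,q,r)$.

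For cusp singularities the strategy is simpler: by the Gross--Hacking--Keel theorem (Looijenga's conjecture) one has an infinite supply of cusps which are \emph{not} smoothable, so they admit no Milnor fibers at all. For these links every Stein filling not diffeomorphic to the minimal resolution is automatically unexpected, and the torus bundle link comes with a well-understood Milnor open book on which similar substitution techniques produce such fillings, which can be distinguished from the minimal resolution by a direct comparison of Betti numbers.

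The main obstacle is twofold. First, one must ensure that each monodromy substitution yields a \emph{positive allowable} factorization with no null-homotopic or boundary-parallel vanishing cycles, so that the resulting Lefschetz fibration genuinely carries a Stein structure. Second, for the triangle case one must arrange that the new filling is negative definite rather than indefinite, which is what allows us to bypass Lin's rigidity and compare the rank directly against Laufer's prescribed value; this will likely force us to restrict to subfamilies of $(p,q,r)$ whose pages have enough room to accommodate the required substitutions while keeping the homological effect controlled.
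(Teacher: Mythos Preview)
Your overall strategy matches the paper's: pin down $b_2$ of every Milnor fiber via Laufer's formula (with $p_g=1$ forced by the topological type), then build Stein fillings by monodromy substitution whose $b_2$ misses that value. For cusps the paper uses the Gay--Mark open book on a genus-one page and lantern substitutions exactly as you suggest; for the non-smoothability it invokes Wahl's numerical criterion $m>r+9$ rather than Gross--Hacking--Keel, but that is a cosmetic difference.

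Two points deserve sharpening. First, your proposed source for the open book in the triangle case is not quite right: \cite{BO} deforms the symplectic resolution to a Stein structure but does not hand you an explicit positive factorization, and the Gay--Mark construction behind Theorem~\ref{thm:examples} fails here because the central vertex of the minimal good resolution of $D_{p,q,r}$ is a bad vertex (valency $3$, self-intersection $-1$). The paper instead uses Bhupal's algorithm, which handles bad vertices via branched-cover subsurfaces and yields a concrete monodromy on $\Sigma_1^{3+k}$ for the family $D_{4,4,4+k}$; a lantern substitution then drops $b_2$ below both the resolution value and the Laufer value. Without an explicit factorization of this kind, the execution of your plan would stall.

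Second, your worry about arranging negative-definiteness is misplaced. You do not need Lin's theorem at all: once Laufer fixes the Milnor number $\mu$, any Stein filling $W$ with $b_2(W)\neq \mu$ is automatically not homotopy equivalent to a Milnor fiber, full stop. (Lin's result then tells you \emph{a posteriori} that such a $W$ must be negative definite, but that is a consequence, not a hypothesis you need to verify.) What you \emph{do} need, and do not mention, is to also rule out the minimal resolution, since ``unexpected'' in the paper means distinct from every Milnor filling including the resolution; the paper handles this by producing a filling whose $b_2$ equals neither $\mu$ nor the number of exceptional curves.
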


Minimally elliptic singularities with reduced fundamental cycle (in their minimal resolution) are known to fall into three special subclasses of singularities: \emph{simple elliptic}, \emph{cusp}, and \emph{triangle singularities}. In the case of simple elliptic singularities, which by definition have a resolution with a single exceptional curve of genus $1$, every minimal symplectic filling of the contact link is diffeomorphic to a Milnor fiber or the minimal resolution by the work of Ohta--Ono \cite{OhtaOno2}. Thus, they constitute a class with a positive answer to Question~\ref{Milnor=Stein}. In contrast, we establish that there are many links of \emph{non-smoothable} cusp singularities (which therefore admit no Milnor fibers) that admit Stein fillings not diffeomorphic to the minimal resolution, as well as many links of (smoothable) triangle singularities that also admit unexpected Stein fillings.

Upon closer examination, however, one notices that at least some of our examples in Theorem~\ref{thm:unexpected-min-elliptic} could still be expected from the algebro-geometric viewpoint. Indeed, as symplectic fillings, these manifolds are obtained by symplectic rational blowdown,  and  it is plausible that on the algebraic side they can be generated via  {\em P-resolutions} \cite{Kol2}. (This was pointed out to us by Paul Hacking.) It would be interesting to know if Theorem~\ref{bounds} extends to this case, that is, if the collection of fillings generated by P-resolutions has bounded topology.

 In all the examples of unexpected Stein fillings discussed above for rational and minimally elliptic singularities, where we know that the Stein fillings of the links necessarily have uniformly bounded topology,
we only had a finite collection of examples for each given link. This brings up the more general question of whether any contact $3$--manifold with uniform bounds on the topology of its Stein fillings can have only finitely many Stein fillings, up to diffeomorphism \cite{baykur-VHM1}. Infinite collections of fillings for links of rational singularities 
seem particularly hard or perhaps impossible to generate:

%Our next theorem answers this:

%\begin{theorem} \label{thm:newclass-intro}
%There exist links of minimally elliptic singularities that admit infinitely many pairwise homeomorphic but not diffeomorphic Stein fillings. 
%\end{theorem}

%We conclude that the following sequence of inclusions hold for links of normal surface singularities, and Stein fillable contact $3$--manifolds in general:
%\begin{equation}\label{eq:trichotomy}
%\{  \text{Stein fillable}\} \, \supsetneq \, \{  \text{Infinitely Stein fillable}  \} \,\red{\supsetneq} \, \{  %\text{Arbitrarily large Stein fillable} \},  
%\end{equation}
%where the fact that the last inclusion is proper is a consequence of our Theorem~\ref{thm:newclass-intro}. 

%We do not know if the exotic fillings in Theorem~\ref{thm:newclass-intro} could arise as Milnor fillings: the corresponding singularity is not taut, which means there are infinitely many compatible analytic structures that could potentially generate different Milnor fibers. Existing tools, both in this paper and in \cite{PS}, only allow to detect that a Stein filling is not homeomorphic to any Milnor fiber, and give no information for exotic copies.  Unlike in the sandwiched case covered in~Theorem~\ref{thm:sandwich-finite}, we do not have a finiteness result for the Milnor fillings of minimally elliptic singularities (many of which are not taut).

\begin{question} \label{question:finite} Is there a link of rational singularity that admits infinitely many, pairwise non-diffeomorphic Stein fillings? Minimal symplectic fillings? 
\end{question}

We may more generally ask if there are any L-spaces that admit infinitely many Stein fillings; no examples are known.  Finiteness results are only available for some special cases, such as lens spaces, where fillings have been fully classified \cite{lisca:lens_fillings}. Otherwise, the question is open even for rational singularities with reduced fundamental cycle. 

Instead of Stein fillings, one may more generally consider \emph{minimal symplectic fillings}. For some classes of contact $3$--manifolds, as as the planar ones, the collections of Stein fillings and minimal symplectic fillings coincide (we consider fillings up to a diffeomorphism). In particular, all minimal symplectic fillings of lens spaces are given by Milnor fibers. However, in general, minimal symplectic fillings are  much easier to generate:  even in the minimally elliptic case when Stein fillings have bounded topology, we can easily construct symplectic fillings with arbitrarily large Euler characteristics. These can be obtained by taking a symplectic fiber sum  \cite{gompf:fibersum} along an embedded symplectic torus of square zero in a Stein filling $W$ of $(Y, \xi)$, where minimality can be argued using \cite{usher:minimality} and \cite{lisca-matic}. Moreover, one can perform knot surgery \cite{fintushel-stern:knotsurgery} with fibered knots along such torus to produce exotic copies. 
It is worth noting that this construction does not work for rational singularities; the presence of a square zero symplectic torus in a Stein filling $W$ implies that $W$ has an indefinite intersection form, which cannot be the case when the singularity is rational.  This fact,  together with the relation to taut foliations discussed below, underscores the challenge in getting potential examples for Question~\ref{question:finite}.

The existence of infinite collections of minimal symplectic fillings is related to taut foliations and the L-space conjecture \cite{BGW}, as reminded to us by Jeremy Van Horn-Morris. A link of a normal surface singularity is an L-space if and only if the singularity is rational \cite{Nem-Lspace}. The L-space conjecture was proved in~\cite{HRRW, SaRas} for all graph manifolds; it follows that link $Y$ carries always a taut foliation in the non-rational case. One can then get a symplectic structure on $Y \times [-1, 1]$ and 
cap off $Y \times \{-1\}$ by a {\em concave} filling \`a la Eliashberg \cite{Eliash-capoff}, which results in a (convex) symplectic filling for $Y \times \{1\}$. Moreover, one can use different concave fillings (with arbitrarily large Euler characteristics) to get infinitely many different minimal symplectic fillings. (In general, one needs to check that the canonical contact structure on the link is obtained as a perturbation of the taut foliation. This would be automatic in examples where the tight contact structure is unique, e.g. for  $Y=\Sigma(2, 3, 6m+1)$, see \cite{MaTo}. For Seifert fibered spaces over $S^2$, one can use a transverse taut foliation to get a transverse contact structure $\xi$, see \cite{LiStip07}, so  $\xi$ must be canonical by \cite{CNPP}.) 

It is often conjectured that planar contact 3-manifolds admit only finitely many minimal symplectic fillings, up to symplectic deformation \cite{LiWe}. One might conjecture that finiteness of fillings generally holds for all rational singularities, or even for all L-spaces.

\smallskip
 \noindent 
\textit{Organization of the paper:} We discuss the constraints on the topological invariants of Milnor fibers in Section~\ref{sec:bounds} and prove Theorem~\ref{bounds}. We prove Theorem~\ref{thm:examples} in Section~\ref{s:examples} and present, for certain families of links, explicit examples of non-Milnor Stein fillings with arbitrarily large topology. Section~\ref{s:min-elliptic} is where we consider the minimally elliptic singularities and construct many examples of unexpected Stein fillings to prove Theorem~\ref{thm:unexpected-min-elliptic}. 
%In Section~\ref{sect:newclass}, we prove a generalization of Theorem~\ref{thm:newclass-intro}, where we present a family of contact $3$--manifolds each of which admits infinitely many Stein fillings but always with bounded topology.
In Section~\ref{s:sa}, we show that there are finitely many Milnor fillings of a sandwich singularity, proving Theorem~\ref{thm:sandwich-finite}. 

%Appendix~A by Starkston and the third author deals with rational singularities, where it is proved that the unexpected Stein fillings the authors obtained in \cite{PS} are not homeomorphic to any Milnor fibers.
 
\enlargethispage{0.1in}
\smallskip
\noindent \textit{Acknowledgements.} We thank Paul Hacking and Jeremy Van Horn-Morris for some interesting discussions, and Jie Min for his comments on the first version of this article. We are grateful to  Kouichi Yasui for pointing out a gap in the first version.
RIB was supported by the NSF grant DMS-2005327. AN was supported by NKFIH Grant \emph{``\'Elvonal (Frontier)''} KKP 144148. OP was supported by NSF grants DMS-2304080 and DMS-1906260. 
The authors would like to thank the organizers of the Nantes workshop on \emph{``Complex and symplectic curve configurations''} in December 2022, where this collaboration was initiated. We are also grateful for the generous  support and the excellent research environment provided by the Erd\H{o}s Center during the Budapest special semester on \emph{``Singularities and low dimensional topology''} in Spring 2023.

\smallskip
\section{Proof of Theorem~\ref{bounds}} \label{sec:bounds}

We first recall some useful inequalities   that follow   from the exact sequence of the pair $(W, \partial W)$, where $W$ is a Stein filling of the link $Y= \partial W$. Since $W$ is Stein and so $H_3(W)=0$ and $H_1(W, \partial W)= H^3(W)=0,$ we have
$$
0 \rightarrow H_3(W, \d W) \stackrel{\d_*}{\longrightarrow} H_2 (\d W) \stackrel{i_*}{\longrightarrow} H_2(W) \longrightarrow  H_2 (W, \d W) \longrightarrow H_1 (\d W) \longrightarrow H_1 (W) \rightarrow 0.
$$

Then,
\begin{equation}\label{bettiYW}
b_1(W)\leq b_1(\d W) = b_1(Y).
\end{equation}
Furthermore, the intersection form $Q_W$ is zero on $i_*(H_2(\d W))$, and, moreover,
by the Poincar\'e--Lefschetz duality $i_*(H_2(\d W))$ is the maximal subspace of $H_2(W)$ where $Q_W$ is zero. Therefore,
\begin{equation}
b_2^0(W)= \rk i_*(H_2(\d W))=
\rk  H_2(\d W) - rk H_3 (W, \d W) = b_1(\d W) - b_1(W);
\label{bees}
\end{equation}
we used the fact that $\d_*$ is injective.  It follows that
\begin{equation} \label{b20}
 b_2^0(W) \leq b_1(Y)
\end{equation}
for every Stein filling of the link $Y$. When $W$ is a Milnor fiber, $b_1(W)=0$ by~\cite{GreSt},
so in that case
\begin{equation}\label{b20=b1}
b_2^0(W)=b_1(Y).
\end{equation}

We are going to use the following result of Stipsicz~\cite{Stip1} providing uniform bounds on the topology of arbitrary Stein fillings:

\begin{prop}\cite[Theorem 1.4]{Stip1} 
\label{stip-bound} 
For a given contact 3-manifold $(Y, \xi)$, the quantity $2\chi(W)+3 \sigma(W)$ is bounded below as $W$ ranges over all Stein fillings of $(Y, \xi)$.
\end{prop}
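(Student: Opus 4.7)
The plan is to reduce the bound on $2\chi(W) + 3\sigma(W)$ to a bound on $c_1(W, J_W)^2$ via Gompf's $d_3$-invariant identity, and then obtain the bound by capping off with a fixed symplectic cap and invoking Seiberg--Witten theory. First, by a classical formula of Gompf, for any almost complex $4$-manifold $(W, J)$ with $\partial W = Y$ whose induced $2$-plane field along $Y$ is homotopic to $\xi$, the rational number
\[ d_3(\xi) \;=\; \tfrac{1}{4}\bigl(c_1(W, J)^2 - 2\chi(W) - 3\sigma(W)\bigr) - \tfrac{1}{2} \]
depends only on $(Y, \xi)$. Hence, over all Stein fillings $W$ of $(Y, \xi)$, bounding $2\chi(W) + 3\sigma(W)$ below is equivalent to bounding $c_1(W, J_W)^2$ below.

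Next, I would cap off each Stein filling $W$ with a fixed symplectic cap $(P, \omega_P)$ of $(Y, \xi)$ provided by Eliashberg's construction (refined by Etnyre--Honda), stabilized via a symplectic sum so that $b_2^+(P) \geq 2$. Then the closed symplectic $4$-manifold $X_W := W \cup_Y P$ has $b_2^+(X_W) \geq 2$, and Taubes' theorem ensures that its canonical $\Spinc$ structure has Seiberg--Witten invariant $\pm 1$, making $K_{X_W}$ a basic class. Since $X_W$ is closed and almost complex, $c_1(X_W)^2 = 2\chi(X_W) + 3\sigma(X_W)$. Combining this with Novikov additivity and $\chi(Y) = 0$ expresses $c_1(W, J_W)^2$ in terms of $c_1(X_W)^2$ minus the cap contribution and a cross term determined entirely by the common boundary data $c_1(\xi) \in H^2(Y)$.

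To bound $c_1(X_W)^2$ from below uniformly, I would build into $P$ symplectically embedded surfaces $\Sigma_i$ of prescribed genus and positive self-intersection, and apply the Seiberg--Witten adjunction inequality (the generalized Thom conjecture of Ozsv\'ath--Szab\'o) to these surfaces inside $X_W$. The resulting inequalities pin down the pairings of $K_{X_W}$ with a generating set of $H_2(P)$, which together with the fixed boundary condition $K_{X_W}\big|_Y = c_1(\xi)$ force $c_1(X_W)^2 \geq C(Y, \xi, P)$. Chaining back through the preceding two steps then yields the desired uniform lower bound on $2\chi(W) + 3\sigma(W)$.

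The main obstacle I anticipate lies in making the SW adjunction constraints strong enough to pin down the cap side of $K_{X_W}$ uniformly in $W$. This is precisely where the stabilization $b_2^+(P) \geq 2$ is essential: it rules out the SW wall-crossing phenomena that complicate the $b_2^+=1$ case, and it ensures a sufficiently rich collection of embedded surfaces in $P$ on which the adjunction bounds can be applied, while the cohomological rigidity of $c_1(\xi)$ ties the $W$-side firmly to $P$.
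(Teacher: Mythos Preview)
The paper does not prove this proposition; it is quoted as \cite[Theorem~1.4]{Stip1} and used as a black box. Your proposal is essentially a reconstruction of Stipsicz's original argument, and its core idea---cap off with a fixed concave cap, pass to a closed symplectic manifold with $b_2^+>1$, and invoke Taubes---is also the strategy sketched in the Remark immediately following the proposition, where the authors work out explicit bounds for $S^1$-bundles $Y_{g,b}$.

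One simplification worth noting: the detour through Gompf's $d_3$-invariant is unnecessary. Since $\chi$ and $\sigma$ are additive under gluing along a closed $3$-manifold, one has directly
\[
2\chi(X_W)+3\sigma(X_W) \;=\; \bigl(2\chi(W)+3\sigma(W)\bigr) + \bigl(2\chi(P)+3\sigma(P)\bigr),
\]
so a lower bound on $c_1^2(X_W)=2\chi(X_W)+3\sigma(X_W)$ immediately yields one on $2\chi(W)+3\sigma(W)$. The paper's Remark uses exactly this: it arranges the cap so that $X_W$ is a relatively minimal symplectic Lefschetz fibration over a positive-genus base, hence minimal, hence $c_1^2(X_W)\geq 0$ by Taubes. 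Your more elaborate plan of using adjunction on surfaces in $P$ to ``pin down'' $K_{X_W}|_P$ would also work (indeed, for symplectic surfaces $\Sigma\subset P$ the adjunction \emph{formula} gives $K\cdot\Sigma$ exactly, not just an inequality), but the minimality route is more direct. Your concern about $b_2^+=1$ wall-crossing is well placed and is handled in both approaches by choosing $P$ with $b_2^+(P)\geq 2$.
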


\begin{remark}
If $(Y, \xi)$ is given by an open book decomposition, and one considers a family of Stein fillings
given by monodromy factorizations, it is possible to give explicit bounds in the above statement.
For example, let $Y_{g,b}$ be an oriented $S^1$--bundle over $\Sigma_g$ with Euler number $-b$
with its Boothby-Wang contact structure, as in the end of the previous section. It is supported by the
open book with page $\Sigma_g^b$ and monodromy the boundary multitwist $\Delta$ in the mapping class group $\M(\Sigma_g^b)$. Capping off every boundary component of this open book a l\`{a} Eliashberg \cite{Eliash-capoff}, we obtain a symplectic cobordism to a $\Sigma_g$--bundle over $S^1$ with trivial monodromy. The latter can be symplectically capped of with a symplectic trivial $\Sigma_g$--bundle over $\Sigma_1^1$. This way, we generated a concave cap
$\mathcal{C}$ for $Y_{g,b}$. Now, let $X$ be a genus $g \geq 2$ allowable Lefschetz fibration over $D^2$ which has the boundary open book with monodromy $\Delta=1$ in $\M(\Sigma_g^b)$ with vanishing cycles on $\Sigma_g^b$ that map to homologically essential curves under an embedding $\Sigma_g^b \hookrightarrow \Sigma_g$. Then,
the cap $\mathcal{C}$ extends this fibration to a relatively minimal symplectic Lefschetz fibration on $Z:=X \cup \mathcal{C}$ over $T^2$. Since $g \geq 2$, we already have $b_2^+(\mathcal{C})>1$, so $b_2^+(Z)>1$. And since a relatively minimal Lefschetz fibration over a positive genus surface is always minimal \cite{stipsicz:minimal},
it now follows from Taubes~\cite{Taubes} that $c_1^2(Z) \geq 0$. Here we have
\[
c_1^2(\mathcal{C})= 2 \chi(\mathcal{C}) +3 \sigma(\mathcal{C})=2(b+2g-2)+0=2(2g+b-2) \, .
\]
It follows from $c_1^2(Z)=c_1^2(X)+c_1^2(\mathcal{C})$ that
\begin{equation} \label{c12}
c_1^2(X) = 2 \chi(X) + 3 \sigma(X) \geq -2(2g+b-2)
\end{equation}
for these special Stein fillings. A similar argument can be used more generally, to find explicit bounds for $2\chi(W) + 3 \sigma(W)$ whenever one starts an explicit open book monodromy for $Y=\partial W$.   
\end{remark}

Now, we turn to the situation where a Stein filling is given by a Milnor fiber. In this case,
there are further constraints on topology: topological invariants of Milnor fibers of smoothings of a normal surface singularity $(X, 0)$ have a subtle relation to analytic invariants of its resolution, namely to geometric genus. We assume that  $\tilde{X} \to X$ is a good resolution.
The {\em geometric genus} of a normal surface singularity is defined as
$$
p_g= h^1(\tilde{X}, \mathcal{O}_{\tilde{X}});
$$
it is known that $p_g$ is independent of the choice of
the resolution, see \cite{Nem-book}. Let
$\mu= \text{rank}\, H_2 (W)=b_2(W)$ denote the \emph{Milnor number} of $W$, write $\mu(W)= b_2(W)= b_2^+(W) + b_2^-(W) + b_2^0(W)$, and let  $\sigma(W)= b_2^+(W) - b_2^-(W)$ denote the signature of $W$.

\begin{prop}[\cite{Durf,Laufermu,Steenbrink}, see also  \cite{Wahl-smooth}] \label{durfee} 
The Milnor fiber $W$ of a smoothing of a
normal surface singularity $(X, 0)$  with the link $Y$ satisfies
\begin{align}
\label{durf1}  2 p_g & = b_2^0(W)+b_2^+(W), \\
\label{durf2}  4 p_g & = \mu(W) + \sigma(W) + b_1(Y).
\end{align}
If the singularity is Gorenstein, then
\begin{align}
\label{lau1} \mu(W) &  = 12 p_g+Z_K^2+ |\mathcal{V}|-b_1(Y), \\
\label{lau2} \sigma(W) & = - 8 p_g - Z_K^2 - |\mathcal{V}|,
\end{align}
where $|\mathcal{V}|$ is the number of vertices in the given resolution graph, and $Z_K$ is the canonical cycle of the resolution, defined by \eqref{adjunction}.
\end{prop}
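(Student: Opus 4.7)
The plan is to treat the four identities in order, deriving the last three from the first via elementary algebra and a Riemann--Roch argument in the Gorenstein case. The starting point for \eqref{durf1} is Steenbrink's theorem. I would endow $H^2(W,\mathbb{C})$ with the limit mixed Hodge structure of the smoothing $f\colon (X,0)\to(\mathbb{C},0)$; for a surface smoothing the weight filtration is supported in weights $\{0,1,2,3,4\}$. The crucial identification is $\dim F^2 H^2(W,\mathbb{C}) = p_g$, obtained by matching $F^2$ with $H^0(\tilde X,\Omega^2_{\tilde X}(\log E))$ on a good resolution and then with $H^1(\tilde X,\mathcal{O}_{\tilde X})$ via Serre duality together with Grauert--Riemenschneider vanishing. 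Reading off $b_2^+(W)$ from the polarized pure weight-$2$ piece and $b_2^0(W)$ from the isotropic non-pure contributions (weights $\neq 2$) then assembles into $b_2^+(W) + b_2^0(W) = 2p_g$.

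For \eqref{durf2} I would invoke the Greuel--Steenbrink vanishing $b_1(W)=0$ already cited in the paper, which via the exact sequence preceding the proposition yields $b_2^0(W) = b_1(Y)$ as in \eqref{b20=b1}. Since $\mu(W)+\sigma(W) = 2b_2^+(W) + b_2^0(W)$, doubling \eqref{durf1} gives
\[
4p_g \,=\, 2b_2^0(W) + 2b_2^+(W) \,=\, b_2^0(W) + \bigl(\mu(W)+\sigma(W)\bigr) \,=\, b_1(Y) + \mu(W) + \sigma(W).
\]

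For the Gorenstein formulas, \eqref{lau1} is Laufer's identity for the Milnor number. My plan is to use the Gorenstein hypothesis to promote the numerical canonical cycle $Z_K$ of \eqref{adjunction} to an effective divisor representing the relative dualizing sheaf; then apply Riemann--Roch on $\tilde X$ to $\mathcal{O}_{\tilde X}$ so that $\chi(\mathcal{O}_{\tilde X}) = 1-p_g$ is expressed in terms of $Z_K^2$ and $|\mathcal{V}|$; and finally combine this with $\chi(W) = 1 + \mu(W)$ (again using the Greuel--Steenbrink vanishing) and the graph-theoretic comparison of $\chi(W)$ with $\chi(\tilde X)$. Once \eqref{lau1} is in place, \eqref{lau2} drops out of \eqref{durf2} by substitution:
\[
\sigma(W) \,=\, 4p_g - b_1(Y) - \mu(W) \,=\, 4p_g - b_1(Y) - \bigl(12p_g + Z_K^2 + |\mathcal{V}| - b_1(Y)\bigr) \,=\, -8p_g - Z_K^2 - |\mathcal{V}|.
\]

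The main obstacle is the Hodge-theoretic step: identifying $F^2 H^2(W,\mathbb{C})$ with $p_g$ genuinely requires the limit mixed Hodge structure on nearby cycles together with Grauert--Riemenschneider vanishing, plus the polarization bookkeeping that isolates $b_2^+$ and $b_2^0$ from the pure weight-$2$ versus the non-pure pieces. By contrast, the Laufer identity, while intricate, reduces to an elementary intersection-theoretic calculation on $\tilde X$ once the Gorenstein hypothesis makes $Z_K$ a genuine divisor class; and \eqref{durf2} and \eqref{lau2} are then purely formal consequences.
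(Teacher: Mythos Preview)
Your reductions --- deriving \eqref{durf2} from \eqref{durf1} via $b_2^0(W)=b_1(Y)$, and \eqref{lau2} from \eqref{lau1} together with \eqref{durf2} --- are exactly what the paper records. For \eqref{durf1} you take Steenbrink's limit mixed Hodge structure route, which is one of the cited sources and a legitimate alternative; the paper instead points to the Laufer approach via Hirzebruch--Riemann--Roch on compactifications, extended to the general normal case by Looijenga's globalization.

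The gap is in your plan for \eqref{lau1}. You propose to ``apply Riemann--Roch on $\tilde{X}$ to $\mathcal{O}_{\tilde{X}}$'' and then make a ``graph-theoretic comparison of $\chi(W)$ with $\chi(\tilde{X})$''. Neither step works as stated. Since $\tilde{X}$ is non-compact, $h^0(\mathcal{O}_{\tilde{X}})$ is infinite-dimensional and there is no Noether-type identity on $\tilde{X}$ alone expressing $1-p_g$ through $Z_K^2$ and $|\mathcal{V}|$. More to the point, $W$ and $\tilde{X}$ are genuinely different $4$-manifolds sharing only their boundary, and there is no a priori topological relation between their Euler characteristics --- any such relation is precisely the content of \eqref{lau1}. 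The bridge is the globalization step the paper names: realize the smoothing in a projective family, apply Noether's formula $12\,\chi(\mathcal{O}) = K^2 + \chi$ to the compactified smooth fiber and to the compactified resolution, and subtract. The three differences localize to $12p_g$, to $-Z_K^2$ (this is where the Gorenstein hypothesis enters, to compare the canonical classes), and to $\chi(W)-\chi(\tilde{X}) = \mu - |\mathcal{V}| + b_1(Y)$, yielding \eqref{lau1}. Without compactifying you have no substitute for this comparison.
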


The two equalities~\eqref{durf1} and~\eqref{durf2} are equivalent by \eqref{b20=b1}, and the formula~\eqref{lau2} follows from~\eqref{lau1} and~\eqref{durf2}. 
%These classical results are corollaries of the Riemann-Roch theorem.   
The first formulas were obtained by Laufer \cite{Laufermu} for the case of a hypersurface singularity, where one applies the Hirzebruch-Riemann-Roch theorem to compute and compare the analytic Euler characteristic of the closed smooth complex manifolds given by the compactification of the singular surface and the Milnor fiber in $\CC\PPP^3$, and a similar compactification of the resolution. The general case is based on a the same strategy together with a globalization result \cite{Loo} that allows to realize the smoothing in a compact space.  

It is important to note that the geometric genus is an {\em analytic} invariant: in general $p_g$ is {\em not} determined by the link $Y$ and depends on the analytic type of $(X, 0)$. In certain special cases, however, the $p_g$ is determined by topology and can be read off directly from the resolution graph. This is the case, for example, for rational singularities ($p_g=0$), or in the case of minimally elliptic singularities ($p_g=1$ and $(X,0)$ Gorenstein). In the general case, even though the value of $p_g$ can vary for the singularities with the same link, there are combinatorial upper bounds for $p_g$ in terms of the resolution graph~\cite{NemOk, Nem-book} (see also Section \ref{s:examples}):

\begin{prop}  \label{pg-bounds}  \cite[Section 6.8.B]{Nem-book}  For a fixed link $Y$, there exists a number $M>0$ such that $p_g\leq M$ for every  analytic singularity type $(X, 0)$ with link $Y$.
\end{prop}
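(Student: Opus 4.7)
The plan is to bound $p_g$ above by a combinatorial quantity $M(\Gamma)$ extracted from the dual graph $\Gamma$ of any good resolution $\pi\colon \tilde X \to X$. Since $\Gamma$, together with all its decorations (vertex genera $g_v$, self-intersections $E_v\cdot E_v$, and incidences), is uniquely determined by the link $Y$ via Neumann's plumbing calculus, such a bound proves the proposition at once.

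The first step is to translate $p_g = h^1(\tilde X,\mathcal{O}_{\tilde X})$ into cohomology on finite subschemes. Let $Z_{\min} = \sum_v E_v$ denote the reduced exceptional cycle. Taking $X$ to be a Stein representative of the germ and using that $\pi$ is proper with one-dimensional fibers, a Leray spectral sequence argument gives $H^2(\tilde X, \mathcal{F}) = 0$ for every coherent sheaf $\mathcal{F}$. Applied to the short exact sequence
\[
0 \to \mathcal{O}_{\tilde X}(-nZ_{\min}) \to \mathcal{O}_{\tilde X} \to \mathcal{O}_{nZ_{\min}} \to 0,
\]
this makes each restriction $H^1(\mathcal{O}_{\tilde X}) \twoheadrightarrow H^1(\mathcal{O}_{nZ_{\min}})$ surjective, and by the theorem on formal functions it is an isomorphism for $n$ sufficiently large. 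Hence $p_g = \sup_n h^1(\mathcal{O}_{nZ_{\min}})$, and it suffices to bound $h^1(\mathcal{O}_Z)$ combinatorially and uniformly over every positive cycle $Z$ supported on the exceptional divisor.

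Such bounds are produced via computation sequences. For a given $Z$, fix $0 = Z_0 < Z_1 < \cdots < Z_m = Z$ with $Z_i - Z_{i-1} = E_{v_i}$ a single (possibly repeated) exceptional component. The short exact sequences
\[
0 \to \mathcal{O}_{E_{v_i}}(-Z_{i-1}) \to \mathcal{O}_{Z_i} \to \mathcal{O}_{Z_{i-1}} \to 0
\]
give the recursion $h^1(\mathcal{O}_{Z_i}) \leq h^1(\mathcal{O}_{Z_{i-1}}) + h^1(E_{v_i},\mathcal{O}_{E_{v_i}}(-Z_{i-1}))$. Each term on the right is the first cohomology of a line bundle on a smooth curve of combinatorially determined genus $g_{v_i}$ and combinatorially determined degree $-E_{v_i}\cdot Z_{i-1}$, hence is combinatorially bounded by Riemann--Roch on curves. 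Choosing the computation sequence in the spirit of Laufer's algorithm so that, after a graph-bounded number of steps, every subsequent degree $-E_{v_i}\cdot Z_{i-1}$ exceeds $2g_{v_i}-2$, the corresponding summands vanish by Serre duality and the partial sums stabilize at a number $M(\Gamma)$ depending only on $\Gamma$.

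The main obstacle is the last point: ensuring that a single combinatorial choice of computation sequence, depending only on $\Gamma$ and not on the analytic structure, gives a bound $M(\Gamma)$ that controls $h^1(\mathcal{O}_{nZ_{\min}})$ simultaneously for all $n$ and across all analytic realizations of $Y$. This uniform stabilization is the content of the Laufer-type estimates recorded in \cite[Section 6.8.B]{Nem-book}, and yields the desired bound $M = M(Y)$.
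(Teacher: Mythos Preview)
Your overall strategy (formal functions plus computation sequences) is sound and lands in the same combinatorial territory as the paper, but the paper's exposition in Subsection~\ref{subsect:geom-genus} streamlines the argument in one decisive place. Instead of bounding $h^1(\mathcal{O}_{nZ_{\min}})$ uniformly in $n$ and invoking a stabilization, the paper applies the Generalized Grauert--Riemenschneider vanishing $H^1(\mathcal{O}_{\tilde X}(-\lfloor Z_K\rfloor))=0$ to obtain the exact identity $p_g=h^1(\mathcal{O}_{\lfloor Z_K\rfloor})$ for the \emph{single} graph-determined cycle $\lfloor Z_K\rfloor$. The d\'evissage then runs along a finite path $0=l_0<l_1<\cdots<l_t=\lfloor Z_K\rfloor$ of length fixed by $\Gamma$, and the bound $p_g\le\sum_i B_i$ follows immediately from Riemann--Roch and Clifford on each $E_{v_i}$, with no need for any asymptotic or uniform-in-$n$ control.

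Your version is not wrong, but the ``main obstacle'' you flag at the end---arranging a computation sequence so that all but a graph-bounded number of increments have $-E_{v_i}\cdot Z_{i-1}>2g_{v_i}-2$, uniformly in $n$---is genuinely the nontrivial step, and you essentially defer it back to the reference. The paper's use of $\lfloor Z_K\rfloor$ via Grauert--Riemenschneider eliminates this obstacle outright and has the bonus of giving explicit, computable bounds in examples (as the paper does for the single-curve resolutions).
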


We can now prove Theorem~\ref{bounds}.

\begin{proof}[Proof of Theorem~\ref{bounds}] This is an immediate corollary of Propositions~\ref{stip-bound}, \ref{durfee} and \ref{pg-bounds}. Indeed,
 $$
 2 \chi(W)+ 3 \sigma(W) = 5 b_2^+(W) - b_2^-(W) + 2 b_2^0(W) - 2b_1(W) +2,
 $$
and by Proposition~\ref{stip-bound} this quantity is bounded below. On the other hand, by Propositions~\ref{durfee},
~\ref{pg-bounds} and formulae~\eqref{bettiYW} and \eqref{b20}, we know that $b_1(W)$, $b_2^+(W)$ and $b_2^0(W)$ are both bounded above. Therefore, $b_2^-(W)$ must be bounded above as $W$ ranges over all possible Milnor fibers $W$. It follows that
the Euler characteristic and signature of all Milnor fillings of $(Y, \xi)$ must be also bounded.
\end{proof}

\begin{remark}
The following observation, which is a consequence of Proposition~\ref{stip-bound}, is due to Stipsicz~\cite{Stip1}:  if $b_2^+$ can be controlled for a family of Stein fillings of $(Y, \xi)$, then $b_2^-$ is also bounded, and therefore this family has bounded Euler characteristic and signature. For example, the principle applies if $(Y, \xi)$ admits a planar open book or is an L-space: then $b_2^+=0$ for gauge-theoretic reasons, and thus all fillings have bounded topology. As we observe, $b_2^+$ is uniformly bounded for Milnor fillings of a given link.
\end{remark}

\smallskip
\section{Proof of Theorem~\ref{thm:examples} and examples}\label{s:examples}

We start with the proof our theorem which will allow us to generate unexpected Stein fillings with arbitrarily large topology.

\begin{proof}[Proof of Theorem~\ref{thm:examples}] We use the results of Gay--Mark~\cite{GayMark} to construct an open book decomposition of the link $(Y, \xi)$. Guided by the dual resolution graph of the singularity, we build the page of the open book as follows. For an irreducible exceptional curve $E$ in the resolution, take a connected surface $S_E$ of genus $g(E)$ with $-a(E) - E \cdot E$ boundary components. The latter quantity is non-negative by hypothesis (i) of the theorem. Whenever two exceptional curves  intersect, we form the connected sum of the corresponding surfaces: in other words, connected sum necks correspond to the edges of the dual resolution graph. The resulting  connected sum of all surfaces $S_E$ gives the page $S$. The monodromy of the open book is the product of positive Dehn twists given by a boundary twist along each boundary component of $S$ and a twist along each connected sum neck. See Figure~\ref{openbk}.

\begin{figure}[htb]
	\centering
	\includegraphics[scale=.9]{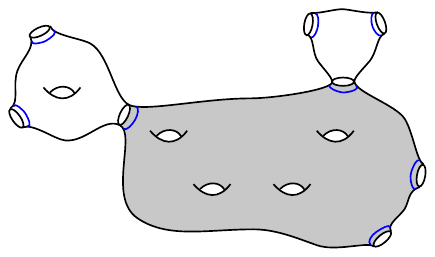}
	\caption{The Gay-Mark open book for the resolution of a non-rational singularity which admits infinitely many unexpected fillings. The monodromy is the product of positive Dehn twists
	along the curves shown in blue.
	The shaded subsurface of genus~$g=4$ with $b=4$ boundary components is the one where the arbitrarily long positive factorizations are supported.}
	\label{openbk}
\end{figure}

By construction and due to hypothesis (ii) of the theorem, the page of the open book has a subsurface $S_0$ of genus $g>2$; when the monodromy of the open book fixes this subsurface. The restriction of the monodromy to $S_0$ is the  boundary multitwist. Because for $b= - E_0 \cdot E_0$ we have  $b \leq 2g -4$ by assumption, results of~\cite{BMVHM}
yield arbitrarily long monodromy factorizations of the boundary multitwist on $S_0$. It follows that the monodromy
of the open book supporting $(Y, \xi)$ also has arbitrarily long factorizations --- and this is reflected in their $b_2(W)$ ---,
 so  corresponding Lefschetz fibrations give an infinite family of Stein fillings with unbounded Euler characteristic. By Theorem~\ref{bounds}, only finitely many of these can be realized by Milnor fibers.
 \end{proof}

In the remaining part of this section, we are going to describe some concrete Stein fillings of links of normal surface singularities that cannot be Milnor fibers by invoking Theorem~\ref{thm:examples}. To this end, we show how to find explicit values for the bounds of Propositions~\ref{stip-bound} and~\ref{pg-bounds}, and compute the relevant invariants in concrete examples.

\subsection{Geometric genus bounds} \label{subsect:geom-genus}
First, we recall the bounds for the geometric genus from~\cite[Section 6.8.B]{Nem-book}. These bounds are constructive and can be explicitly  computed in our examples. For a more self-contained exposition, we review the relevant material from~\cite[Section 6.8.B]{Nem-book}. %(see the book for more details).

Fix a good resolution $\tilde{X}$ of $(X, 0)$, with dual resolution graph $\Gamma$ whose vertices $v\in \Gamma$ correspond to exceptional curves $E_v$, $v\in \Gamma$. Let $g_v$ be the genus of $E_v$.

The lattice $L = H_2(\tilde{X}, \Z)$ is freely generated over $\Z$ by the classes of $E_v$, $v \in \Gamma$; its intersection form is negative definite. Let $L' = H^2(\tilde{X}, \Z) = H_2(\tilde{X}, \partial \tilde{X}, \Z)$ be the dual lattice; $L'$ can be identified with $\Hom (L, \Z)$. Extending the intersection form on $L$ to $L \otimes \QQ$, we can think of $\Hom (L, \Z)$ as the set of  elements $l' \in L\otimes \QQ$ such that $(l', l) \in \Z$.  Thus, we will identify $L'$ with a sublattice of $L \otimes \QQ$ and write its elements as rational linear combinations of the generators $E_v$.

Let $Z_K= -c_1 (\Omega_{\tilde{X}}^2) \in L'$ be the canonical cycle of the resolution.
The cycle $Z_K$ satisfies the adjunction relations
\begin{equation}\label{adjunction}
Z_K E_v = {E_v}\cdot E_v +2 - 2 g_v, \quad v \in \Gamma,
\end{equation}
and is in fact uniquely determined by these relations as a rational linear combination of classes $E_v$,
since the intersection form corresponding to $\Gamma$ is negative definite. It is known (see \cite[Example 6.3.4]{Nem-book})
that  all coefficients of $Z_K$ are positive if $\tilde{X}$ is a minimal good resolution and  if $(X, o)$ is {\em not} a rational double point.
For rational double points, the minimal resolution is good, $Z_K=0$, and  $p_g=0$.
%(For rational double points, $p_g=0$.)
Let $\lfloor Z_k \rfloor \in L$ be the integral cycle given by the floor of $Z_K \in L'$; 
the \emph{floor} of a rational cycle $Z=\sum r_v E_v$ is defined as $\lfloor Z \rfloor = \sum \lfloor r_v \rfloor E_v$, where $\lfloor r_v \rfloor$ stands for the greatest integer less than or equal to $r_v \in \QQ$.

In what follows, we will use short exact sequences of the form
\begin{align}
 \label{exact-seq1}
0 & \longrightarrow \OO_{\tilde{X}}(-Z) \longrightarrow \OO_{\tilde{X}} \longrightarrow \OO_{Z} \longrightarrow 0, \\
 \label{exact-seq2}
0 & \longrightarrow \OO_{F}(-E) \longrightarrow \OO_{F+E} \longrightarrow \OO_{E} \longrightarrow 0
\end{align}
for effective non-zero  divizors $Z$, $E$, and $F$ on $\tilde{X}$.
Here, as usual, $\OO_{F}(-E)$ stands for
$\OO_{\tilde{X}}(-E)|_F= \OO_{\tilde{X}}(-E)/\OO_{\tilde{X}}(-E-F)$, and $\OO_{\tilde{X}}(-Z)$ is the sheaf of holomorphic functions vanishing
to order at least  $n_i$ along each irreducible component $E_i$ of $Z=\sum_i n_i E_i$.
These exact sequences follow from the definition of the structure sheaves.
We will also use the fact that
\begin{equation}\label{vanish}
H^{\geq 2}(\OO_{\tilde{X}}(-Z))=0, \quad H^{\geq 2}(\OO_{F}(-E))=0,
\end{equation}
because higher cohomologies vanish for sheaves of holomorphic sections of line bundles on complex curves, and, more generally, on 1-dimensional complex spaces, and in the case of  $\tilde{X}$ it follows from the  \emph{Formal Function Theorem}, see e.g. \cite[Section 6]{Nem-book}.

\begin{lemma} The geometric genus of $(X, 0)$ can be expressed as
\begin{equation}\label{pg-ZK}
p_g=h^1(\tilde{X}, \OO_{\lfloor Z_K \rfloor}).
\end{equation}
\end{lemma}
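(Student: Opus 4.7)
The plan is to apply the short exact sequence (\ref{exact-seq1}) with $Z := \lfloor Z_K \rfloor$. Passing to the long exact sequence in cohomology and using the vanishings (\ref{vanish}), one reaches the right-exact piece
$$H^1(\tilde X, \OO_{\tilde X}(-\lfloor Z_K \rfloor)) \longrightarrow H^1(\tilde X, \OO_{\tilde X}) \longrightarrow H^1(\tilde X, \OO_{\lfloor Z_K \rfloor}) \longrightarrow 0.$$
Since $h^1(\tilde X, \OO_{\tilde X}) = p_g$ by definition, the lemma reduces to the single vanishing $H^1(\tilde X, \OO_{\tilde X}(-\lfloor Z_K \rfloor)) = 0$.

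As a first step and sanity check, I would verify this in the Gorenstein case: there $Z_K$ is already integral, so $\lfloor Z_K\rfloor = Z_K$. The adjunction relations (\ref{adjunction}) say that $Z_K + K_{\tilde X}$ has zero intersection with every exceptional curve, and combined with negative-definiteness of the intersection form this identifies $\OO_{\tilde X}(-Z_K)$ with $\omega_{\tilde X}$ on a neighborhood of the exceptional set. The desired vanishing is then Grauert--Riemenschneider, $R^1\pi_*\omega_{\tilde X}=0$, which together with the Stein property of $X$ yields $H^1(\tilde X,\omega_{\tilde X}) = 0$.

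For the general case, the natural move is to write $-\lfloor Z_K\rfloor = K_{\tilde X} + \{Z_K\}$, where $\{Z_K\} := Z_K - \lfloor Z_K\rfloor$ is a $\QQ$-fractional cycle with coefficients in $[0,1)$, and to recover the vanishing via a Kawamata--Viehweg-type statement for the $\QQ$-twist $\omega_{\tilde X}(\{Z_K\})$. The numerical hypothesis this would require is $\{Z_K\} \cdot E_v \geq 0$ for every exceptional $E_v$---and this is the hard part: the defining relations of $Z_K$ do not force such positivity in a general good resolution, so some more refined argument is needed to control $H^1$ of the intermediate line bundle.

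If the $\QQ$-divisor route runs into trouble, the fallback I would pursue is an inductive one: build $\lfloor Z_K\rfloor$ from the zero cycle one irreducible exceptional curve at a time, applying the exact sequence (\ref{exact-seq2}) at each step to compare the cohomology of successive structure sheaves $\OO_Z$. One then tracks when adding the next curve strictly increases $h^1(\OO_Z)$ versus leaves it unchanged, and uses the adjunction relations for $Z_K$ to identify $\lfloor Z_K\rfloor$ as the precise stopping cycle where the total reaches $p_g$---in the language of the theory, showing that $\lfloor Z_K\rfloor$ dominates the cohomological cycle of $(X,0)$. This combinatorial bookkeeping on the negative-definite intersection lattice is, in either approach, the technical heart of the argument.
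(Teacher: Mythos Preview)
Your reduction is exactly the paper's proof: apply the short exact sequence~(\ref{exact-seq1}) with $Z=\lfloor Z_K\rfloor$, use~(\ref{vanish}) on the right, and conclude once $H^1(\tilde X,\OO_{\tilde X}(-\lfloor Z_K\rfloor))=0$. The paper dispatches this last vanishing in one line by invoking the \emph{Generalized Grauert--Riemenschneider Vanishing Theorem} as a black box, citing \cite[Section 6.4]{Nem-book}; it does not attempt to rederive it.

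So there is no genuine gap in your plan, only unnecessary hesitation. The vanishing you flag as ``the hard part'' is a known theorem in this setting and is precisely what the reference supplies; you do not need the Kawamata--Viehweg numerics $\{Z_K\}\cdot E_v\geq 0$ to hold (and you are right that they need not), nor do you need to run the inductive d\'evissage yourself. That said, your fallback sketch---building up to $\lfloor Z_K\rfloor$ one exceptional curve at a time via~(\ref{exact-seq2})---is essentially how the generalized GR vanishing is proved in the surface-singularity literature, so your instincts are sound; it is simply more than the lemma requires once the citation is in hand. Note also that this same d\'evissage is exactly what the paper uses \emph{after} the lemma, not to establish~(\ref{pg-ZK}) but to extract the combinatorial upper bound~(\ref{sum-bound}) on $p_g$.
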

\begin{proof}
Consider the short exact sequence of sheaves
$$
0 \longrightarrow \OO_{\tilde{X}}(-\lfloor Z_K \rfloor) \longrightarrow \OO_{\tilde{X}} \longrightarrow \OO_{\lfloor Z_K \rfloor} \longrightarrow 0.
$$
From the cohomological long exact sequence, we have
$$
\dots \longrightarrow H^1 ( \OO_{\tilde{X}}(-\lfloor Z_K \rfloor) ) \longrightarrow H^1( \OO_{\tilde{X}}) \longrightarrow H^1 (\OO_{\lfloor Z_K \rfloor}) \longrightarrow H^2 (\OO_{\tilde{X}}(-\lfloor Z_K \rfloor)) \longrightarrow \dots
$$
By~\eqref{vanish}, we get zero on the right; we also have
$H^1 ( \OO_{\tilde{X}}(-\lfloor Z_K \rfloor) )=0$ by  the Generalized Grauert-Riemenschneider Vanishing Theorem, see
~\cite[Section 6.4]{Nem-book}.  Therefore, $p_g=h^1(\OO_{\tilde{X}})= h^1(\OO_{\lfloor Z_K \rfloor})$.
\end{proof}

We can now estimate $h^1(\lfloor Z_K \rfloor)$ via a d\'evissage type argument. Consider a \emph{path}
 in the lattice $L$ from $0$ to $\lfloor Z_K \rfloor$, that is, take 
 a a sequence of integral cycles $\{l_i\}\in L$  such that
$$
l_0=0, \dots, \quad l_{i+1}=l_i+E_{v_i}, \dots \quad l_t= \lfloor Z_K \rfloor,
$$
where $E_{v_i}$ is the  basis element of $L$ corresponding to  some $v_i \in \Gamma$. (We will also
denote by $l_i$ the corresponding effective divisor in $\tilde{X}$.)
Writing $\OO_{l_i}= \OO_{\tilde{X}}|_{l_i}$, we have short exact sequences at each step, $i=0, \dots, t-1$,
$$
0 \longrightarrow  \OO_{E_{v_i}}(-l_i) \longrightarrow \OO_{l_{i+1}} \longrightarrow \OO_{l_i} \longrightarrow 0,
$$
and the corresponding long exact sequences in cohomology,
$$
\dots\longrightarrow H^0(\OO_{l_i} ) \longrightarrow H^1(\OO_{E_{v_i}}(-l_i) )
\longrightarrow H^1(\OO_{l_{i+1}} ) \longrightarrow H^1 (\OO_{l_i} ) \longrightarrow
H^2 (\OO_{E_{v_i}}(-l_i) ) \longrightarrow \dots.
$$
Since $H^2 (\OO_{E_{v_i}}(-l_i) )=0$, we get
\begin{equation}
\label{i-bound}
h^1(\OO_{l_{i+1}})- h^1(\OO_{l_i}) \leq h^1 (\OO_{E_{v_i}}(-l_i)).
\end{equation}
Here  $E_{v_i}$ is a smooth curve of genus $g_{v_i}$,
and $\OO_{E_{v_i}}(-l_i)$ is the sheaf of sections of a line bundle of
degree $d_i = - l_i \cdot E_{v_i}$ on $E_{v_i}$. Recall that for a line bundle $L$ of degree $d$
on a smooth curve $C$ of genus $g$  we have the following inequalities
$$h^1(L) \left\{
\begin{array}{ll}
=g-d-1 &\mbox{if} \ \ \  \ \ \ \ \ \ \ \ \ \  \  \ d\leq -1,\\
\leq g-\lceil d/2\rceil & \mbox{if} \ \ \ \ \ \ \ \ \  \ 0 \leq d\leq 2g-2,\\
=0 & \mbox{if}\ \ \   2g-1\leq d.
\end{array}\right.$$
Indeed, by Riemann-Roch,  $h^1(L)=h^0(L)+g-d-1$.  In particular, if $d\leq -1$ then $h^0(L)=0$ and $h^1(L)=g-d-1$.
If  $0\leq d\leq 2g-2$ then $h^0(L)\leq 1+\lfloor d/2\rfloor$ by Clifford's Theorem (see e.g. \cite[p. 107]{AlgCurves}).
 On the other hand, for any $d$,
$h^1(C, L)= h^0(C, L^{-1}\otimes K_C)$ by the Kodaira--Serre duality, where $K_C$ is the  canonical bundle of $C$ with degree $2g-2$.
Hence, if $d\geq 2g-1$, then ${\rm deg}(L^{-1}\otimes K_C)<0$ and  $h^0(L^{-1}\otimes K_C)=0$.

In particular, along our path,
$$
h^1 (\OO_{E_{v_i}}(-l_i)) \leq B_i := \left\{
\begin{array}{ll}
g_{v_i}-d_i-1 &\mbox{if} \ \ \ \  \ \ \ \ \ \ \ \ \ \ \ \ \ d_i\leq-1,\\
g_{v_i}-\lceil d_{i}/2\rceil& \mbox{if} \ \ \ \  \  \ \  \ \ \ \ \ 0\leq  d_i\leq 2g_{v_i}-2,\\
0 & \mbox{if}\ \ \  2g_{v_i}-1\leq d_i.
\end{array}\right.$$

Now add the inequalities~\eqref{i-bound} for all $0\leq i<t$ to get
\begin{equation}\label{sum-bound}
p_g= h^1(\lfloor Z_K \rfloor) \leq \sum_{i=0}^{t-1} B_i.
\end{equation}
Note that the resulting bound \eqref{sum-bound} depends only on the resolution graph but not on the analytic structure.
The bound from the right-hand side of \eqref{sum-bound} depends also on the choice of the path, with special well--chosen paths
we can realize sharper bounds.

The bound \eqref{sum-bound} is not very sharp in general; this weakness usually occurs when some of the genera $g_v$ are non-zero.
However, if the link is a rational homology sphere, then in many cases the bound  \eqref{sum-bound} is optimal, that is, for a well-chosen path,
the topological right-hand side can be realized as the geometric genus of a (very special) analytic structure. See subsection 11.3.A  or Corollary 11.6.11 of \cite{Nem-book}.

\begin{example}
Consider the case where the minimal resolution has a unique exceptional curve $E$ of genus $g$ with self-intersection $-b$, where $g, b\geq 1$.  This is the simplest case of Theorem~\ref{thm:examples}, where for $b\leq 2g -4$ we are able to generate infinitely many Stein fillings. From~\eqref{adjunction}, we have
$$
Z_K = \frac{2g-2+b}{b}\, E, \quad \lfloor Z_K \rfloor = \left(\left\lfloor \frac{2g-2}b \right\rfloor +1\right) E.
$$
The upper bound as in $\eqref{sum-bound}$ is then obtained from the path
$$
0, E, 2E, \dots, \left(\left\lfloor \frac{2g-2}b \right\rfloor +1\right) E.
$$
Then we apply  \eqref{sum-bound}, where at $i$-th stage we consider the line bundle of degree
$d_i=-(i E)\cdot E = bi $ on $E$, $0\leq i\leq \lfloor(2g-2)/b\rfloor$.
Note that $d_i=ib \leq  2g-2$. Therefore,
\begin{equation}\label{pg<4g-squared}
p_g \leq
\sum_{i=0}^{\lfloor(2g-2)/b\rfloor} \Big( g-\Big\lceil\frac{ib}{2}\Big\rceil\Big).
%\Big(\Big \lfloor \frac{2g-2}{b}\Big\rfloor +1\Big) \Big(2g-1-\frac{b}{2}\cdot \Big\lfloor \frac{2g-2}{b}\Big\rfloor\Big).
\end{equation}
The reader is invited to compute the bound for some specific values  of $b$ and $g$.  For example,
 if $b=1$ then we obtain $p_g\leq g^2$.
  If    $b=2$  then
  $p_g\leq  (g^2+g)/2$.

When $g>3$ and $b=2g-4$  (the maximum value of $b$ where we will be able to produce infinitude of fillings),
the sum \eqref{sum-bound} comes from the path $0, E, 2E$ and has only two terms: $B_0=g$ and $B_1=2$.  Hence
\begin{equation}\label{pg<2g+2}
p_g \leq B_0 +B_1 = g+2.
\end{equation}
If $g=3$ and $b=2g-4=2$ then $p_g\leq B_0+B_1+B_2=3+2+1=6$.

A straightforward computation shows that for any such $b,g\geq 1$ the following inequality holds:
\begin{equation}\label{pg<4g-squared-2}
p_g \leq \sum_{i=0}^{t-1} B_i \leq g^2.
\end{equation}
\end{example}

\subsection{Computing topological invariants} To constrast these bounds with Proposition~\ref{durfee} and pinpoint specific Stein fillings that cannot
come from Milnor fibers, we compute the topological invariants for a particular family of Stein fillings we will describe explicitly, through allowable Lefschetz fibrations they are equipped with. We focus on the examples where the link is an $S^1$-bundle over a surface, with its Boothby--Wang structure. As noted earlier, in this case, we have a compatible open book with page $\Sigma_g^b$, a compact genus $g$ surface with $b$ boundary components, and the monodromy is the boundary multitwist $\Delta:=t_{\delta_1} \cdots t_{\delta_{b}}$. We assume that $g \geq 3,  b \geq 1$, and $b \leq 2g-4$. We will describe the allowable Lefschetz fibrations on our Stein fillings via their monodromy factorizations coming from~\cite{BMVHM} and compute the invariants of these fillings.

For a product  $P$ of positive Dehn twists in $\M(\Sigma_g^b)$, let the \emph{length} $|P|$ be the number of Dehn twist factors in $P$.  Recall that for any Dehn twist $t_c$ and $ \phi \in \M(\Sigma_g^b)$,  the conjugate of the Dehn twist $t_c^\phi=\phi \, t_c \, \phi^{-1}=t_{\phi(c)}$.  In particular,  for a product $P$ of positive Dehn twists,  $P^\phi$ is also a product of positive Dehn twists with $|P^\phi|=|P|$.

We are going to describe factorizations of the multitwist $\Delta$ into arbitrarily large number of positive Dehn twists in $\M(\Sigma_g^b)$.  We begin with the maximal case $b=2g-4$, where in our factorizations, each Dehn twist curve $c$ will satisfy the following additional property: under an embedding of $\Sigma_g^b \hookrightarrow \Sigma_g^{b'}$,  the curve $c \subset \Sigma_g^b$ maps to a homologically essential curve in $\Sigma_g^{b'}$, for any  $b' \leq b$.  (Clearly, this then holds  for \emph{any} embedding.) One can view the embedding as capping off some of the boundary components of
$\Sigma_g^b$ with discs to get $\Sigma_g^{b'}$.

\begin{figure}[htb]
	\centering
	\includegraphics[height=155pt, trim=0 -10 0 10]{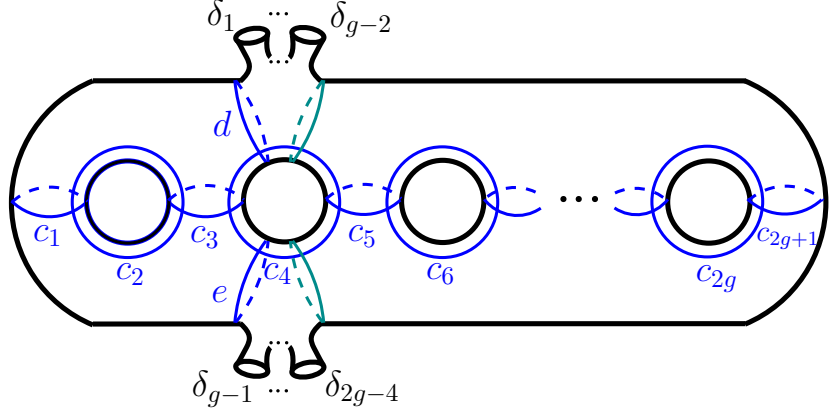}
	\caption{The curves $c_i, d, e$ on $\Sigma_g^{2g-4}$ with boundary components $\delta_j$.}
	\label{fig:curves1}
\end{figure}

\begin{figure}[htb]
	\centering
	\includegraphics[height=110pt]{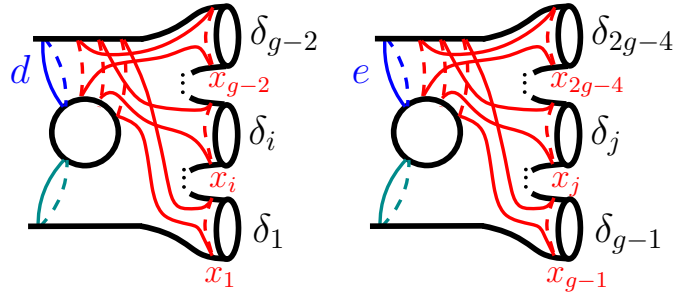}
	\caption{The curves $x_j$ drawn on the two subsurfaces of $\Sigma_g^{2g-4}$ containing all the boundary components.}
	\label{fig:curves2}
\end{figure}

Let $c_i,  d,  e,  x_j,  \delta_j$ be the curves on $\Sigma:=\Sigma_g^{2g-4}$ as shown in Figures~\ref{fig:curves1},~\ref{fig:curves2}  and let $d_k,  e_k, f_k$ be the curves:
\begin{align*}
& d_k=t_{c_{k-3}}^{-1}t_{c_{k-2}}^{-1}t_{c_{k-1}}^{-1}(c_k)\\
& e_k=t_{c_{k-3}} t_{c_{k-2}} t_{c_{k-1}}(c_k) & \\
& f_k=t_{c_{k-5}}t_{c_{k-4}}t_{c_{k-3}}t_{c_{k-2}}t_{c_{k-1}}(c_k)&
\end{align*}
whenever the indices are well-defined.  With these in mind,  we consider the following products of positive Dehn twists:

\begin{align*}
&\psi:= t_{c_4} t_{c_3} t_{c_2} t_{c_1}  t_{c_5} t_{c_4} t_{c_3} t_{c_2}  t_{c_6} t_{c_5} t_{c_4} t_{c_3}  t_{c_7} t_{c_6} t_{c_5} t_{c_4}  \\
&A_m:= (t_{c_4} t_{c_3} t_{c_2} t_{c_1} t_{c_1} t_{c_2} t_{c_3} t_{c_4} t_{c_4} t_d t_{c_3} t_{c_4})^{t_{c_3}^{-m}t_e^m}
((t_{c_1} t_{c_2} t_{c_3})^2 t_{c_2} t_{c_1} t_{c_3} t_{c_2})^m,\text{  } m \in \N\\
&B:=t_{(t_{c_4} t_d t_{c_3} t_{c_4})^{-1}(c_5)} t_{c_2} t_{(t_{c_5}t_{c_4})^{-1}(c_6)} t_d t_{t_{c_4}^{-1}(c_3)} t_{c_7} t_{(t_{c_5}t_{c_4})^{-1}(c_6)} t_d t_{t_{c_4}^{-1}(e)} \,  t_{c_5} t_{c_3} t_{c_4} t_{c_2} t_{c_3}  \\
&C:= (t_{d_{10}} t_{d_{11}} \cdots t_{d_{2g+1}}) \, (t_{e_{2g+1}} \cdots t_{e_9} t_{e_8}) \\
&D:=D_0^2 \, (t_{x_1} t_{x_2} \cdots t_{x_{g-2}}) \,  (t_{x_{g-1}} t_{x_g} \cdots t_{x_{2g-4}}) \,  (t_{f_9} t_{f_8} t_{f_7} t_{f_6} )
\end{align*}

where in the last product,  $D_0:=t_{c_1} t_{c_2} t_{c_3}$ for even $g$ and $D_0:=t_{c_3} t_{c_2} t_{c_1}$ for odd $g$.

It was shown in \cite[Proof of Theorem~9]{BMVHM} that for $\Sigma=\Sigma_g^{2g-4}$ the following hold:
\begin{equation}\label{eq:monodromy}
P_m:=A_m B \,C \,D^\psi = \Delta \ \ \ \text{in } \M(\Sigma) \ \  \text{ for all } m \in \N \, .
\end{equation}
Counting the Dehn twists in the factorization, we see that $|P_m|=6g+10m+18$.
Observe that as we cap off $\partial \Sigma$ with discs,  each one of the Dehn twist curves of the positive factorization $P_m$ of $\Delta$ maps to a non-separating curve in $\Sigma_g$; so its image remains homologically essential under any embedding $\Sigma=\Sigma_g^{2g-4} \hookrightarrow \Sigma_g^{b'}$ for $1 \leq b' \leq 2g-4$, satisfying the additional property we asked for.  In particular,  the Lefschetz fibration $X_m$ over the disc with monodromy factorization~\eqref{eq:monodromy} is \emph{allowable},  and  yields a Stein filling of the  contact boundary $Y:=\partial X_m$,  taken with the contact structure compatible with the fixed boundary open book.

Next, we calculate the homology groups of the Stein fillings $X_m$ given by~\eqref{eq:monodromy}.

\noindent \underline{\textit{First homology}}. We will  show that $H_1(X_m)=0$ for each $m \in \N$.  We have $G:=H_1(X_m) = H_1(\Sigma) \,  / \, N$ where $N \leq  H_1(\Sigma)$ is generated by the homology classes of the vanishing cycles of the monodromy factorization  of $X_m$.  Note that the homology classes of the curves $\{ c_i, \delta_j\}$,  taken with auxiliary orientations,  generate $H_1(\Sigma)$. So to prove our claim, it suffices to show that there is a collection of monodromy curves $ \{v_k\}$  whose homology classes generate those of  all $c_i$, $\delta_j$ in $H_1(\Sigma)$, or equivalently,
by arguing that the images of $[c_i], [\delta_j]$ in the quotient  $H_1(\Sigma) \,  / \, \langle \{[v_k]\} \rangle$ will all be trivial.  By a slight abuse of notation, whenever $x \in H_1(\Sigma)$ can be seen to be generated by $ \{[v_k]\}$, we are going to say that  $x=0$ in $G$.

Since the order of the Dehn twists in the monodromy factorization here  is irrelevant,  we can carry out our calculation for both odd and even genus $g$ at the same time,  as the two cases only differ by only the order of the Dehn twists in the subproduct $D_0$.  Also,  without explicitly mentioning it after the first few instances, we are going to repeatedly appeal to the Picard--Lefschetz formula whenever a monodromy curve is expressed as the image of a curve under a mapping class that is expressed as a product of (possibly negative) Dehn twists.

From the monodromy curves $c_1,  c_2, c_3$ of  $A_m$ and $c_4, c_5, c_7, d$ of $B$,  we immediately have $[c_i]=0$ for $i=1, 2,3,4,5$ and $7$, and $[d]=0$ in $G$. (When $m=0$, the curves $c_1, c_2, c_3$ are in the now trivially-conjugated factor of $A_m$.)
Moreover,  by Picard--Lefschetz, the homology class of the monodromy curve  $(t_{c_5}t_{c_4})^{-1}(c_6)$ of $B$  is equal to $[c_6] + x$ in $H_1(\Sigma)$ for some $x \in \langle [c_4], [c_5]\rangle$,  and since $x=0$ in $G$,  we conclude that $[c_6]=0$ in $G$, too.  Similarly,   the monodromy curve $t_{c_4}^{-1}(e)$ of $B$ yields $[e]=0$ in $G$.  Next,  applying Picard--Lefschetz to each monodromy curve $e_i=t_{c_{i-3}} t_{c_{i-2}} t_{c_{i-1}}(c_i)$ in $C$,  we see that $[e_i]=[c_i]+x$ for some $x \in \langle [c_{i-3}], [c_{i-2}], [c_{i-1}] \rangle$.  Since we already have $[c_i]=0$ for all $i=1, \ldots, 7$,  using the monodromy curves $e_8, \ldots , e_{2g+1}$,  we inductively see that $[c_i]=0$ also for $i=8, \ldots, 2g+1$.

Therefore, we have $[c_i]=0$ for all $i=1, \ldots, 2g+1$ and $[d]=[e]=0$ in $G$.
Now,  the homology classes of the monodromy curves $x_j$ in $D^\psi$ are all of the form $[x_j] +x$ for $x \in \langle \{[c_i]\} \rangle$, since the conjugating mapping class $\psi \in \langle \{[c_i]\} \rangle$.  So we also get $[x_j]=0$ for all $j=1, \ldots, 2g-4$.  Orienting $\{x_j\}$ and $\{\delta_j\}$ suitably, we have
\[
[x_j]=
\begin{cases}
    [d]+[\delta_j],& \text{if } j=1, \ldots , g-2, \\
    [e]+[\delta_j],& \text{if } j=g-1, \ldots, 2g-4.
\end{cases}
\]
Since $[x_j]=[d]=[e]=0$,  it follows that $[\delta_j]=0$ in $G$ for all $j=1 ,\ldots, 2g-4$.

Hence, all the generators $[c_i]=[\delta_j]=0$ in $G$, so $H_1(X_m)=0$.\footnote{
In fact,  a similar calculation,  where  we keep track of the based homotopy classes of the monodromy curves under conjugations (instead of invoking Picard--Lefschetz for a fast track),  shows that $\pi_1(X_m)=1$ for any $m \in \N$; cf. ~\cite[Proof of Theorem~B]{ArabadjiBaykur}.}

\noindent \underline{\textit{Second homology}}. First of all, we have $H_1(X_m,  Y)=0$ since $H_1(X_m)=0$ and $Y$ is connected.  By the universal coefficients theorem and Poincar\'{e}-Lefschetz duality,  we conclude that $H_2(X_m)$ has no torsion.

Since $|P_m|=6g+10m+18$ is the number of nodes $\ell$  in $X_m$, we calculate the Euler characteristic of $X_m$ as
\[ \chi(X_m)=\chi(D^2)\,  \chi (\Sigma_g^{2g-4}) + \ell=6-4g+(6g+10m+18)=2g +10m+24 \, .
\]
Since $X_m$ admits a $2$--handlebody,   we  also have
\[
\chi(X_m)= b_0(X_m) - b_1(X_m)+b_2(X_m) = 1+ b_2(X_m) \, .
\]
It follows that $H_2(X_m) \cong \Z^{2g +10m+23}$.

By~\eqref{bees}, we have
$b_2^0(X_m)=b_1(\partial Y)-b_1(X_m)= b_1(Y)$.  Here $Y$ is diffeomorphic to an $S^1$--bundle $\Sigma_g$ with Euler number $b=2g-4 \neq 0$,  so $H_1(Y) \cong \Z^{2g} \oplus \Z_b$.  Thus,  $b_2^0(X_m)=2g$,
so the rank of the maximal non-degenerate subspace of $H_2(X_m)$ with respect to the intersection form is $b_2(X_m)-b_2^0(X_m)=10m+23$.

In order to calculate the signature of $X_m$, let us first calculate the signature of the associated Lefschetz \emph{pencil} $\widehat{X}_m$ prescribed by the (same) monodromy factorization~\eqref{eq:monodromy}. We calculate the latter following the approach in \cite{EndoNagami}: Take the infinite presentation of $\M(\Sigma_g^b)$ with generators as all Dehn twists along homotopically essential curves, and then $\sigma(\widehat{X}_m)$ is the algebraic sum of the signatures of the relators involved in the derivation of~\eqref{eq:monodromy} from the trivial word. As we  examine the derivation of the relation ~\eqref{eq:monodromy} in \cite{BMVHM}, we see that the only relators with nontrivial signature contributions employed here are odd chain relators (of different lengths and some are inverses) and a pair of daisy relators each of which are derived from $(g-3)$ lantern relators. Denoting a length $k$ chain relator by $C_k$ and the lantern relator by $L$, we have:

\begin{eqnarray*}
\sigma(\widehat{X}_m)&=&\sigma(C_{2g+1})
+\sigma(C_{2g-3}^{-1})+(g-2)\sigma(C_{3}^{-1}) +m\sigma(C_3) + (2g-6) \sigma(L)
\\
&=&  -2g(g + 2)+ 2(g-2)g+(g-2-m)\,6+(2g-6)
\\
&=& -6m-18  \, .
\end{eqnarray*}
As $X_m$ is the complement of a regular fiber of self-intersection $b=2g-4>0$ in the pencil $\widehat{X}_m$, we have
\[
\sigma(X_m)=\sigma(\widehat{X}_m)-1=-6m-19
\, .
\]

Combining  $b_2^+(X_m)+b_2^-(X_m)=10m+23$ and
$b_2^+(X_m)-b_2^-(X_m)=-6m-19$ we then get
$b_2^+(X_m)=2m+2$ and $b_2^-(X_m)=8m+21$.

Moreover, we can easily see that $X_m$ has  odd intersection form.  This follows from the observation that the monodromy factorization~\eqref{eq:monodromy} is Hurwitz equivalent to another factorization of the form
\begin{equation*}
 t_{c_1}t_{c_3} t_d P'_m = \Delta  \ \ \ \text{in } \M(\Sigma) \ \  \text{ for all } m \in \N \,
\end{equation*}
which is simply obtained by conjugating away all Dehn twists other than $t_{c_1}, t_{c_3}, t_d$ that appear in the original factorization.  Clustering these three vanishing cycles on the same fiber, we get another Lefschetz fibration whose total space is still $X_m$, but now it has a reducible fiber component $\Sigma'  \cong S^2$, which is obtained by pinching these three vanishing cycles.  As the fiber framing of each Lefschetz $2$--handle is $-1$, the surface $\Sigma'$ has self-intersection $-3$.

Hence, the indefinite, odd intersection form of $X_m$ is
\[
Q_{X_m} \cong (2m+2)\langle 1 \rangle \oplus (8m+23) \langle -1 \rangle \oplus (2g) \langle 0 \rangle \, .
\]

\smallskip
More generally, let $X_{g,b, m}$ denote the allowable Lefschetz fibration of genus $g \geq 3$ with monodromy factorization
\begin{equation*}
P_{g, b,m}:=A_m B \,C \,D^\psi = \Delta \ \ \ \text{in } \M(\Sigma_g^{b}) \ \  \text{ for all } m, b \in \N\text{ with
 } 1 \leq b \leq 2g-4   \, ,
\end{equation*}
where for simplicity we use the same letters for the images of the products of Dehn twists in~\eqref{eq:monodromy}  under the homomorphism induced by the embedding $\Sigma_g^{2g-4} \hookrightarrow \Sigma_g^b$ and $\Delta$ is the boundary multitwist on $ \Sigma_g^b$. These are Stein fillings of the contact $3$--manifold $Y_{g,b, m}$ taken with the contact structure compatible with the boundary open book with monodromy $\Delta \in \M(\Sigma_g^b)$. From our calculations above for the maximal case $X_m=X_{g, 2g-4, m}$ we easily conclude that $H_1(X_{g, b, m})=0$ as well and
\begin{equation} \label{eq:intform}
Q_{X_{g,b,m}} \cong (2m+2)\langle 1 \rangle \oplus (2g-b+8m+17) \langle -1 \rangle \oplus (2g) \langle 0 \rangle \, .
\end{equation}

\medskip
\begin{example} For the fillings $X_{g,b,m}$ of $Y_{g,b}$ we have discussed above, we can now contrast the invariants we calculated with the bounds for geometric genus and the formulas of Proposition~\ref{durfee}. 

We have $b_2^0(X_{g,b,m})=2g$ and
$b_2^+(X_{g,b,m})=2m+2$, thus if $X_{g,b,m}$ is a Milnor fiber, we must have
$2p_g=2g+2m+2$. Since by~\eqref{pg<4g-squared-2} we know that $p_g \leq g^2$,  $X_{g,b,m}$ cannot be a Milnor filling for
$m>g^2-g-1$. If $g>3$ and $b=2g -4$, then $p_g \leq g+2$ by~\eqref{pg<2g+2} so taking $m>1$ will ensure that  $X_{g,2g-4,m}$ is not a Milnor fiber.
If $g=3$ and $b=2$ then  $m>2$ will ensure that  $X_{g,2g-4,m}$ is not a Milnor fiber.

It might be worth noting that, alternatively, we can avoid the signature calculation by combining ~\eqref{c12}, ~\eqref{durf2} and~\eqref{pg<4g-squared-2} to show that
$$
\chi(W) \leq 12 g^2 - 2g + 2b -1
$$
for all Milnor fillings, and therefore  $X_{g,b,m}$ cannot be a Milnor fiber when $10m> 
12g^2 -6g+3b -21$. Similarly, for
the case $b=2g -4$ this argument gives a bound $\chi \leq 12 g^2+2g-9 $, and it follows that  $X_{g,2g-4,m}$ is an unexpected filling for  $10m>12 g^2-33$. This simpler calculation gives a larger bound for $m$ compared to the bound obtained in the previous paragraph, but it has the advantage of generalizing easily to handle other examples covered by Theorem~\ref{thm:examples}.
\end{example}

 \section{Unexpected fillings for minimally elliptic singularities} \label{s:min-elliptic}

 Minimally elliptic singularities are Gorenstein singularities of geometric genus $p_g=1$. They also admit a topological definition, in terms of the minimal resolution $\tilde{X}$, as follows.   Let $L$ be the homology lattice of the $4$--manifold with boundary given by the resolution, with the basis given by the homology classes of the irreducible exceptional curves $E_v$, $v \in \Gamma$. We use the same notation as in Subsection \ref{subsect:geom-genus},
 but we emphasize that one needs to take the minimal (not minimal good) resolution for the topological characterization (so $\Gamma$ will just stand for the set of vertices). We recall the definition of Artin's fundamental cycle of the singularity, denoted $Z_{min} \in L$. (This definition will hold in any resolution.) Consider the set of divisors
$$
\{ Z= \sum_{\substack{v \in \Gamma, \\ m_v \geq 0}} m_v E_v \in L\ | \  Z \cdot E_v \leq 0 \text{ for all } E_v \}.
$$
This set has a partial order, defined by    $\sum m_v E_v    \geq \sum n_v E_v$   if $m_v \geq n_v$ for all $v$. There is a unique nonzero minimal element $Z_{min}$
with respect to this partial order, called the \emph{fundamental cycle}. Since the exceptional divisor is connected, we must have $Z_{min} \geq \sum_{v \in \Gamma} E_v$.

The Riemann--Roch function  $\chi: L \to \Z$ is given by 
 $$
\chi(D) = (-D \cdot D + D\cdot Z_K)/2,
$$
where  $Z_K= -c_1 (\Omega_{\tilde{X}}^2)$ is the canonical class. (Formula~\eqref{adjunction} is valid whenever all the irreducible exceptional curves $E_v$ are smooth, which will be the case for all our examples, although may not hold in general.)   A cycle $C\in L$, $C>0$  is called {\em minimally elliptic} if $\chi(C)=0$,  and  $\chi(l)>0$  for each $l\in L$ such that $0<l<C$. The singularity is minimally elliptic if $Z_{min}$ is a minimally elliptic cycle.\footnote{Equivalently, one can require that $\chi(Z_{min})=0$ and the graph $\Gamma$ satisfy the following property: if $\Gamma'$ is an arbitrary connected subgraph of $\Gamma$, with the same weights, then $\Gamma'$ supports a rational singularity; recall that rationality can also be detected from the resolution graph via Laufer sequences.}

 The topological characterization of minimally elliptic singularities implies  that every analytic singularity of the same topological type will be minimally elliptic. This property is important to us, as it allows to control all Milnor fillings for the given link even if they come from different analytic singularities. (For example, the graph in Figure~\ref{min-el-fam} has a valence 4 vertex when $n=1$, 
 therefore the corresponding topological type admits infinitely many  compatible analytic structures.) In particular, all compatible analytic types are Gorenstein and share the same $p_g=1$. If the link is a rational homology sphere, then by~\eqref{lau1}, \eqref{lau2}, any Milnor fiber $W$ of a minimally elliptic singularity satisfies
 \begin{equation} \label{lauf-min-ell}
 b_2^+(W)=2.
 \end{equation}
 Francesco Lin showed that all non-negative definite Stein fillings have the same intersection forms as Milnor fibers, in particular, they also satisfy ~\eqref{lau1}, \eqref{lau2}.

 \begin{prop}\label{prop:lin} ~\cite{Lin} Let $(Y, \xi)$ be the contact link of a minimally elliptic surface singularity with $b_1(Y)=0$.
 Then all Stein fillings of $(Y, \xi)$ have an even intersection form with $b_2^+=2$ and $b_2^-=8h+10$ for some fixed $h \in \Z$.
 \end{prop}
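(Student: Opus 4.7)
The result is due to Lin, using monopole Floer homology; here is the strategy I would follow. The starting point is the fact that minimally elliptic links are topologically \emph{almost $L$-spaces}: the reduced monopole Floer homology $\widehat{HM}_{\mathrm{red}}(Y)$ has total rank one, supported in the spin-c structure determined by the canonical contact class. The plan is to feed this Floer-theoretic minimality into the Kronheimer--Mrowka contact invariant machinery in order to constrain an arbitrary Stein filling $W$ of $(Y,\xi)$.

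First I would exploit functoriality of the monopole contact invariant under Stein cobordisms. Removing a small ball from $W$ produces a Stein cobordism from $(S^3,\xi_{\mathrm{std}})$ to $(Y,\xi)$, whose induced map on monopole Floer homology sends the contact class of $S^3$ to that of $(Y,\xi)$. Non-vanishing of both contact invariants, combined with the rank-one structure of $\widehat{HM}_{\mathrm{red}}(Y)$ and the grading shift formula across the cobordism (which depends on $\chi(W)$, $\sigma(W)$ and $c_1(W,J)^2$), forces $b_2^+(W)$ to take exactly one possible value in the indefinite range. For minimally elliptic singularities this value is $b_2^+(W) = 2$, consistent with Laufer's formula \eqref{lauf-min-ell} from the Milnor fiber case.

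Next, the Gorenstein hypothesis implies that the canonical cycle $Z_K$ of the resolution is integral, so $c_1(\xi) = 0$ in $H^2(Y;\Z)$ and the spin structure on $Y$ determined by $\xi$ extends over $W$. Hence $W$ is spin and its intersection form is even: $c_1(W,J)$ reduces to zero modulo $2$, so by Wu's formula every class in $H_2(W;\Z)$ has even self-intersection. With $b_2^+(W) = 2$ and evenness in hand, the congruence $b_2^-(W) = 8h+10$ reduces to showing $\sigma(W)$ lies in a single residue class modulo $8$. I would deduce this by capping $W$ with a fixed spin cobordism from $(Y,\xi)$ to an appropriate closed or almost-closed spin reference, producing a closed spin $4$-manifold whose signature is divisible by $16$ by Rokhlin's theorem; the contribution of the cap is a constant depending only on the link, which pins down $\sigma(W) \bmod 8$.

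The main obstacle is the first step: nailing down the \emph{exact} value $b_2^+(W) = 2$, rather than merely an inequality or a parity constraint. This is the technical core of Lin's work, requiring a careful analysis of where the contact invariant sits inside the almost-$L$-space Floer package, together with the interplay between the absolute $\QQ$-gradings on $\widehat{HM}(Y)$ and the cobordism-induced degree shifts, in order to rule out larger values of $b_2^+$ and to produce the clean affine structure on $b_2^-$.
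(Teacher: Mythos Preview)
Your proposal and the paper's proof differ substantially in scope. The paper does \emph{not} attempt to reconstruct Lin's internal argument; it treats Lin's theorem as a black box and simply verifies that its hypotheses hold for minimally elliptic links. Concretely, the paper checks two things: (a) the reduced monopole Floer homology of $Y$ has rank one in $\mathfrak{s}_\xi$, which it establishes via the chain of isomorphisms monopole $\cong$ Heegaard Floer $\cong$ lattice cohomology (invoking \cite{Ta,CGH,KLT,Nem-plumbed,zemke2023equivalence}) together with the known rank-one reduced lattice cohomology in the canonical $\Spinc$ structure; and (b) $\mathfrak{s}_\xi$ is self-conjugate, argued not via Gorenstein/$Z_K$ considerations but simply because it is the \emph{unique} $\Spinc$ structure with nonvanishing reduced homology and hence must be fixed by conjugation. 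This places the links in Lin's ``Type II'' case.

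Your sketch of Lin's internal mechanism has a genuine gap in the second paragraph. You assert that because $c_1(\xi)=0$, the spin structure on $Y$ determined by $\xi$ \emph{extends over} $W$, and hence $W$ is spin. This is unjustified: self-conjugacy of $\mathfrak{s}_\xi$ on $Y$ gives no reason whatsoever for an arbitrary Stein filling to be spin (consider $\cptwo\setminus B^4$ filling $S^3$). The evenness of $Q_W$ and the mod-$8$ congruence in Lin's theorem do not come from a Rokhlin argument applied after extending a spin structure; they come from Pin(2)-equivariant monopole Floer homology, which is available precisely when $\mathfrak{s}_\xi$ is self-conjugate and which yields Fr{\o}yshov-type inequalities constraining the intersection form of \emph{any} filling whose induced $\Spinc$ structure restricts to $\mathfrak{s}_\xi$. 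Your Rokhlin step inherits the same gap, since it too presupposes $W$ is spin.

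Your first paragraph is closer to the mark and correctly identifies the almost-$L$-space input, but you should recognize that the paper's proof stops there: it does not reprove the $b_2^+=2$ conclusion, it cites it.
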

\begin{proof} Lin proves the result in a more general context, for contact 3-manifolds $(Y, \xi)$ whose reduced monopole Floer homology has rank 1 in the $Spin^c$ structure $\mathfrak{s}_{\xi}$ induced by $\xi$, where $\mathfrak{s}_{\xi}$ is required to be self-conjugate. The hypothesis on reduced 
homology is satisfied for links of minimally elliptic singularities due to isomorphisms between the monopole Floer and Heegaard Floer homologies (see \cite{Ta, CGH, KLT} and their sequels) and between Heegaard Floer homology and lattice cohomology  \cite{Nem-plumbed, zemke2023equivalence}. This is because the reduced lattice cohomology is known to have rank 1 in the canonical $Spin^c$ structure (and zero in all other $Spin^c$ structures). When orientations and grading shifts are taken into account, one sees that the minimally elliptic links are of ``Type II'' in the terminology of~\cite{Lin}. The canonical $Spin^c$ structure on $Y$ can be seen to be self-conjugate  because it is the unique 
$Spin^c$-structure with non-vanishing reduced homology.
%(and its conjugate $Spin^c$ structure would also have the non-vanishing property).
 \end{proof}

Lin's results do not extend to negative definite fillings, and we will indeed produce such unexpected  Stein fillings with intersection forms different than that of the minimal resolution. By~\eqref{lau1}, \eqref{lau2}, the Euler characteristic and signature are the same for all the Milnor fibers. The minimal resolution may have different invariants, but it suffices to construct three or more Stein fillings with different invariants to find an unexpected filling. Below, we produce these as allowable Lefschetz fibrations, using monodromy factorization.

\subsection{An infinite family of minimally elliptic links}
Consider the family of minimally elliptic singularities prescribed by the dual resolution graphs in Figure~\ref{min-el-fam}, 
with $n=2k+1$ vertices in the $(-2)$-chain. These are of the form $A_{n, ***}$ in Laufer's notation \cite{laufer:minimally_elliptic}.

 \begin{figure}[htb]
	\centering
	\includegraphics[scale=1]{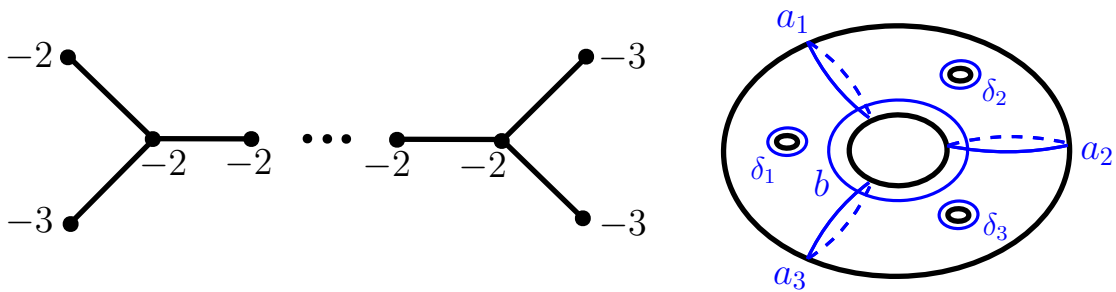}
	\caption{Minimally elliptic singularities with unexpected fillings.}
	\label{min-el-fam}
\end{figure}

The fundamental cycle here is $Z_{min}= Z_K=(1, 1, 2, 2, \dots, 2, 2, 1, 1)$, where $Z_{min}$ has coefficient $1$ on each end vertex of the graph and coefficient $2$ on each vertex in the chain of the $n$ vertices with weights $-2$. 

To get an open book decomposition of the link, we use Bhupal's algorithm in \cite{Bhu1, Bhu2}. The link $Y$ is obtained by plumbing circle bundles over spheres, as dictated by the resolution graph. Bhupal's procedure gives an open book whose binding is the union of a certain number of the fibers of these circle bundles, according to the numerical data from the intersection form of the resolution and the fundamental cycle. By~\cite{NeuPich, Pich}, this open book arises from a Milnor-type fibration associated to a holomorphic function vanishing at the isolated singular point of a germ of an appropriate analytic surface $(X,0)$, such that $Y$ is the link of $(X, 0)$. Uniqueness results of \cite{CNPP, CPP} then imply that the open book is compatible with the canonical contact structure on $Y$.

In Bhupal's algorithm, the page of the open book  built out of subsurfaces that correspond to the individual exceptional curves. (In our situation all exceptional curves have $g=0$, although sometimes the algorithm works more generally.) When the \emph{good vertex} condition~\eqref{goodvert} is satisfied, each 
subsurface has the same genus as the corresponding exceptional curve, with the number of holes determined by the Euler number and the valency of the vertex in the graph, and these subsurfaces form the page as dictated by the graph, see also \cite{GayMark, EO}.  %Note that~\eqref{goodvert} implies that  $Z_{min}= (1, 1, \dots,  1)$.  
When we have a \emph{bad vertex}, that is a vertex violating the condition~\eqref{goodvert}, the subsurfaces are coverings of the exceptional spheres with punctures; the number of sheets is determined by the corresponding coefficient of $Z_{min}$. In this construction, the monodromy of the resulting open book is presented as a product that may contain fractional multitwists. Under favorable circumstances, the latter can be expressed as a product of positive Dehn twists. For further details of the algorithm, we refer the reader to \cite{Bhu1, Bhu2}.

Applying this algorithm, we get a compatible open book on the link $(Y_k, \xi_k)$ with monodromy\footnote{This expression is slightly different from the particular case contained in \cite{Bhu2}; as confirmed to us by Bhupal, the monodromy in~\cite[Case 63]{Bhu2} is incorrect due to a small notational mistake.}  
\begin{equation}\label{eq:inf-elliptic-monodromy}
\phi= t_{\delta_1} t_{\delta_2} t_{\delta_3}  t_{a_1}^k t_{a_3}^k t_{a_1} t_{a_3} t_{b} t_{a_2} t_{a_3} t_{b} \ \ \text{ in } \ \M(\Sigma_1^3) \, ,
\end{equation}
where the monodromy curves are as shown in Figure~\ref{min-el-fam}.   

For each $n$, we can produce three Stein fillings $\{X_i\}$ with different second Betti numbers. Each Stein filling will have $H_1(X_i)=0$, as easily seen by the monodromy curves generating the first homology of the page in each case, so $b_2(X_i)=\chi(X_i)-1$ is determined by the number of Dehn twist curves in the factorization.

The filling $X_1$ is given by the allowable Lefschetz fibration with monodromy factorization as above. We have $b_2(X_1)= n+4$,  which matches the $b_2$ of the minimal resolution. Next is the filling $X_2$, given by the allowable Lefschetz fibration whose monodromy factorization is obtained from~\eqref{eq:inf-elliptic-monodromy} by the \emph{star relation} (the elliptic pencil monodromy with three base points): we  replace $t_{\delta_1} t_{\delta_2} t_{\delta_3}$ factor by $(t_{a_1} t_{a_2} t_{a_3} t_{b})^3$. Then $b_2$ increases by $9$, so $b_2(X_2)= n+13$, which matches the $b_2$ of a Milnor fiber (here we computed $Z_K^2=-3$ and used \eqref{lau1}). Finally, we obtain the filling $X_3$ by applying a lantern substitution to~\eqref{eq:inf-elliptic-monodromy}: we replace the Dehn twists around the curves $\{a_1, a_3, \delta_2, \delta_3\}$ by three other positive Dehn twists. Here the presence of the boundary component $\delta_1$ ensures that all three of the new curves are also homologically essential on $\Sigma_1^3$. The corresponding Stein filling has $b_2(X_3)=n+3$ and thus cannot be a Milnor filling or the resolution.

\smallskip
\subsection{Triangle singularities} \label{sec:triangle} A special class of minimal elliptic singularities are the \emph{triangle singularities} $D_{p,q,r}$, also known as Dolgachev singularities. 
These are characterized by three rational curves transversally intersecting at one point, with a resolution graph as shown in Figure~\ref{fig:triangle}, where $\frac{1}{p}+\frac{1}{q}+\frac{1}{r} <1$ and  $r \geq q \geq p \geq 2$. 

 \begin{figure}[htb]
	\centering
	\includegraphics[scale=1]{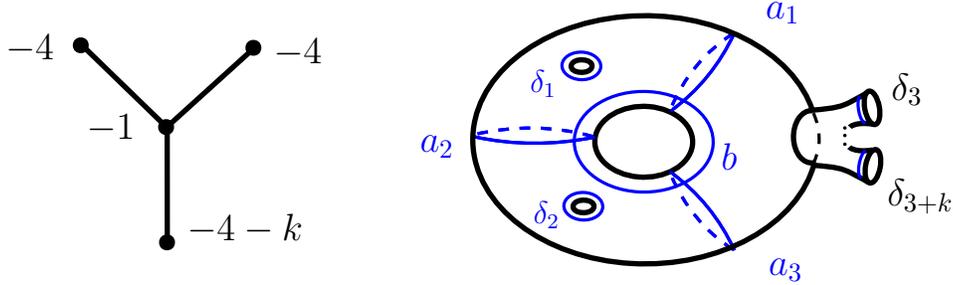}
	\caption{A family of smoothable triangle singularities and the monodromy curves for the open book on the link. }
	\label{fig:triangle}
\end{figure}

 A triangle singularity $D_{p,q,r}$ is known to be smoothable if and only if $p+q+r \leq 22$ and $(p,q,r) \neq (2,10,10)$ \cite{Looijenga:triangle, Pinkham2}. We consider the family $D_{4,4,4+k}$, for $k=0, \ldots, 6$,  where $k=0$ is the hypersurface singularity $\{x^4-y^3-z^3=0\} \subset \mathbb{C}^3$.  
 
 Label the vertices in the resolution graph in Figure~\ref{fig:triangle} so that the central vertex comes first and the one with weight $4+k$ comes last.  The fundamental cycle for it is $Z= (3, 1, 1, 1)$.  Applying Bhupal's algorithm, we see that the link $(Y_k, \xi_k)$ is supported by an open book with monodromy
\begin{equation}\label{eq:triangle-monodromy}
\phi= t_{\delta_1} t_{\delta_2} \cdots t_{\delta_{3+k}} t_{a_1} t_{a_2} t_{a_3} t_b  \ \ \text{ in } \ \M(\Sigma_1^{3+k}) \, ,
\end{equation}
where the monodromy curves are as shown in Figure~\ref{fig:triangle}.  Here the subsurfaces that make up the page $\Sigma_1^{3+k}$ of the open book are $\Sigma_1^3$ corresponding to the central vertex (as  a 3-fold cover of $\Sigma_0^6$ in the algorithm), two copies of $\Sigma_0^2$ corresponding to  the vertices with weights $(-4)$, and a $\Sigma_0^{k+2}$ corresponding to the last vertex. The resulting page is $\Sigma_1^{3+k}$. The algorithm yields a monodromy expression with $\frac{2\pi}{3}$--fractional multitwist along the boundary components of the subsurface $\Sigma_1^3$, which can be rewritten as shown using the star relation as above.

Once again we obtain three Stein fillings for each link $(Y_k, \xi_k)$ with distinct second Betti numbers. As before, we will easily see that that $H_1(X_i)=0$ and compute $b_2$ from the number of Dehn twist curves in the factorization. 

Let the filling $X_1$ be given by the allowable Lefschetz fibration with monodromy factorization~\eqref{eq:triangle-monodromy}, which has $b_2(X_1)=3$, matching $b_2$ of the minimal resolution. Next is the filling $X_2$, given by the allowable Lefschez fibration whose monodromy factorization we derive from~\eqref{eq:triangle-monodromy} by substituting the factor $t_{\delta_1} t_{\delta_2} \cdots t_{\delta_{3+k}}$ with 12 Dehn twists along homologically essential curves, coming from the monodromy of an elliptic pencil with $9$ base points \cite{BMVHM}. (For $k > 6$, there is no such pencil.) Here $b_2(X_2)= 12$, which agrees with $b_2$ of the Milnor fiber(s). Lastly, we obtain the filling $X_3$ by applying a lantern substitution to~\eqref{eq:triangle-monodromy} as before: we replace the Dehn twists around the curves $\{a_1, a_3, \delta_1, \delta_2\}$ by three other positive Dehn twists. So $b_2(X_3)=2$, and this filling is unexpected.

\begin{remark} There are smoothable triangle singularities where unexpected fillings detected by the Euler characteristic as above do not exist: for the contact link $Y=\Sigma(2, 3, 7)$ of the hypersurface singularity $\{x^2+y^3+z^7=0\} \subset \mathbb{C}^3$ (a triangle with $2,3,7$ legs),  every negative definite Stein filling  has the same intersection form as the minimal resolution \cite[Theorem~1.19]{EG}.
  \end{remark}

 \subsection{Non-smoothable cusp singularities} Rational surface singularities are always smoothable, and the Milnor fiber for one of the smoothings is diffeomorphic to the minimal resolution. In the minimally elliptic case, a new phenomenon appears: 
 some of the singularities in this class are  {\em non-smoothable}.

 The following dual resolution graphs correspond to cusp singularities. The graph is a cycle with $r$ vertices corresponding to exceptional curves of genus 0, and the self-intersections of the
 exceptional curves are the integers $-a_i$, where
 $a_i\geq 2$ for all  $i=1, \dots, r$, and  $a_j\geq 3$  for some  $j$.
 See Figure~\ref{cusp-graph}. 
 \begin{figure}[htb]
	\centering
	\includegraphics[scale=0.8]{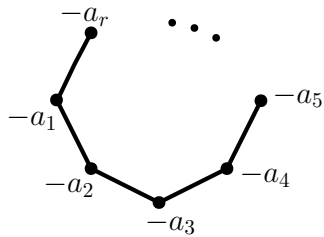}
	\caption{The minimal resolution graph for a cusp singularity. }
	\label{cusp-graph}
\end{figure}

By
 \cite{Wahl-smooth}, a singularity in this family is not smoothable
 whenever $m>r+9$, where the multiplicity $m$ is given by $\sum_{i=1}^r (a_i-2)$. We consider non-smoothable cusp singularities with
 \begin{equation}
  \sum_{i=1}^r (a_i-3) >9.
 \end{equation}

  The non-smoothable cusp singularities are taut, that is, there is only one analytic structure compatible with the given resolution graph. For this analytic structure, there are no smoothings and thus no Milnor fibers. The only Milnor filling in this case is given by the minimal resolution. We can construct unexpected fillings for many of these links:
 
 \begin{theorem} \label{thm:cusps} For any given $N>0$, there exists a non-smoothable cusp singularity as above whose link admits at least $N$ Stein fillings with different $b_2$.
 \end{theorem}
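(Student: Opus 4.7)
The plan is to construct $N$ Stein fillings with pairwise distinct $b_2$ via monodromy substitutions on a supporting open book, following the template used for the triangle and infinite minimally elliptic families. First I would apply Bhupal's algorithm to the cusp resolution graph of Figure~\ref{cusp-graph}. Since all vertices are good ($a_i \geq 2$), this agrees with the Gay--Mark construction of Section~\ref{s:examples} and yields a page $\Sigma_1^m$ with $m=\sum(a_i-2)$, the single genus arising from the cycle of the graph. The positive monodromy is
\[
\phi = t_{\delta_1}\cdots t_{\delta_m}\, t_{n_1}\cdots t_{n_r},
\]
consisting of boundary twists along the $m$ ambient boundary components together with twists along the $r$ non-separating neck curves coming from the connect-sum edges. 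This factorization corresponds to the initial Stein filling $X_0$ diffeomorphic to the minimal resolution.

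Next, I would choose the cusp so that one vertex, say $E_1$, has very large self-intersection $a_1$, ensuring both the non-smoothability inequality $\sum(a_i-3)>9$ and ample room in the large planar subsurface $S_{E_1}\cong\Sigma_0^{a_1}\subset\Sigma_1^m$ of the page. Within $S_{E_1}$, I would perform $N$ pairwise disjoint positive monodromy substitutions, each changing the total number of Dehn twist factors by a controlled nonzero amount, and hence changing $b_2$ of the associated Stein filling. Each substitution is of ``chain-to-lantern'' type: by first applying a short sequence of Hurwitz moves together with an auxiliary positive relation (combining, for instance, a chain-style relation on a 3-punctured subdisk with a lantern relation on a 4-punctured subdisk) within a pre-chosen window of $S_{E_1}$, one obtains a new positive factorization of $\phi$ with a strictly different Dehn twist count. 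Iterating in $N$ disjoint windows of $S_{E_1}$ yields allowable Lefschetz fibrations whose total spaces $X_1,\ldots,X_N$ realize $N$ pairwise distinct values of $b_2$.

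The main obstacle is engineering the positive substitutions so that they genuinely produce new positive factorizations of $\phi$ while keeping all vanishing cycles homologically essential in $\Sigma_1^m$. In $\Sigma_1^m$ the lantern and star/chain relations do not apply directly to the initial factorization, because the neck twists are along non-separating curves and no collection of ambient boundary components cobounds a pants or four-holed-sphere subsurface. The resolution is to work entirely within the large planar sub-surface $S_{E_1}$, where auxiliary interior separating curves can be introduced into the factorization through a careful combination of commutations, Hurwitz moves, and positive planar-surface relations. The abundance of ambient boundary components, guaranteed by taking $a_1$ sufficiently large in terms of $N$, provides both the disjoint regions required for the $N$ substitutions and the ample ``free'' boundary components needed to detect homological essentiality of every new vanishing cycle via intersection pairings; this ensures that all the resulting Lefschetz fibrations carry Stein structures and fill $(Y,\xi)$ as required.
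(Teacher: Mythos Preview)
Your overall framework is right: the Gay--Mark open book on a cusp link has a genus-one page with $m=\sum(a_i-2)$ boundary components, the monodromy is a product of boundary twists and neck twists, and different positive factorizations of this monodromy yield Stein fillings with different $b_2$. The gap is in the substitution step.

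You propose concentrating all the complexity in a single vertex with very large $a_1$ and then performing $N$ substitutions inside the planar subsurface $S_{E_1}\cong\Sigma_0^{a_1}$. But the monodromy restricted to $S_{E_1}$ is exactly the boundary multitwist of that planar surface---one positive Dehn twist along each of its $a_1$ boundary circles. There is no lantern (or daisy) move available here: any $4$-holed sphere $S\subset S_{E_1}$ has at least one boundary component that is an \emph{interior} curve of $S_{E_1}$, and no twist along such a curve appears in the factorization. Your ``chain-to-lantern'' fix is not a concrete relation (chain relations need non-separating curves, which $S_{E_1}$ lacks), and Hurwitz moves alone preserve the number of factors. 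So as written, the proposal does not produce any new positive factorization, let alone $N$ of them with distinct lengths.

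The paper's argument avoids this obstacle by distributing the complexity across many vertices rather than one. If $a_1=a_3=\dots=a_{2N+1}=4$ (with $r>2N+1$), then each $S_{E_{2k+1}}$ is itself a $4$-holed sphere whose four boundary curves---two punctures and two necks---\emph{all} carry twists in the monodromy, so the lantern relation applies directly. The alternating indices ensure these $4$-holed spheres are pairwise disjoint (adjacent ones would share a neck twist), allowing $0,1,\dots,N$ independent lantern substitutions and hence $N+1$ fillings with distinct $b_2$. Non-smoothability is arranged by taking the remaining $a_i$ large. The key point you are missing is that the lantern lives naturally on a vertex with $a_i=4$, not inside a vertex with $a_i\gg 4$.
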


\begin{proof} A compatible open book for the contact link is constructed
from the resolution graph of Figure~\ref{cusp-graph} using \cite[Theorem 1.1]{GayMark}. Indeed, \cite{GayMark} describes a compatible Lefschetz fibration for the symplectic plumbing given by the minimal resolution. The fiber of the Lefschetz fibration 
%(that is, the page of the open book on the boundary) 
is a torus, with subsurfaces given by punctured spheres that correspond to the exceptional curves. The monodromy is given by the product of positive Dehn twists around the necks connecting these subsurfaces, together with positive boundary twists around each puncture. The total number of vanishing cycles on the boundary of the subsurface corresponding to the $i$-th exceptional curve equals $a_i$. If $a_i=2$, then the corresponding subsurface is an annulus with no additional punctures (resulting in two parallel vanishing cycles). See Figure~\ref{cusp-obook}.
\begin{figure}[h]\label{cusp-obook}
\centering
\includegraphics[scale=1]{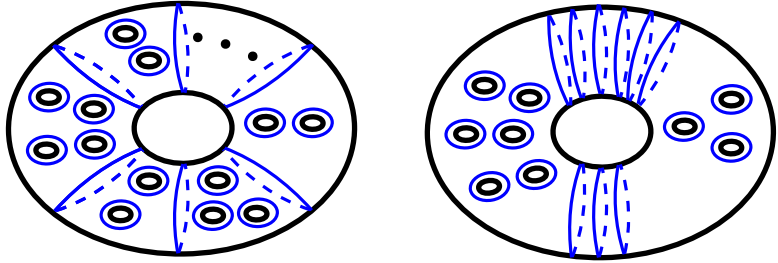}
\caption{An open book for the link of a cusp singularity has genus one; each exceptional curve of self-intersection $-a_i$ corresponds to a cylindrical subsurface of the page with $a_i-2$ punctures. When $a_i=2$, the subsurface is an annulus with no punctures. The monodromy is the product of the positive Dehn twists around the punctures and the meridianal curves separating the subsurfaces.
Left: an open book for the cusp with $a_1=4, a_2=6, a_3=4, a_4=5, a_5=4, \dots$.
Right: an open book for the cusp with $a_1=8, a_2=a_3=2, a_4=5, a_5=a_6=\dots=a_{10}=2$.
}
\end{figure}

Under favorable numerical conditions on $\{a_i\}$, the monodromy admits different positive factorizations, which allows to generate different Stein fillings.
If $a_i=4$, then the page contains a subsurface given by a sphere with 4 punctures, with boundary Dehn twists around each puncture; we can apply the lantern substitution to the product of these four Dehn twists. Suppose that $a_1=a_3=\dots=a_{2N+1}=4$, for $r>2N+1$. Then at least $N$ lantern substitutions can be performed on pairwise disjoint subsurfaces; different number of such substitutions yields Stein fillings with different $b_2$.

Additional lantern substitutions are often possible, and under other numerical hypotheses, one may be able to use generalized lantern relations; e.g. the monodromy factorization illustrated on the right of Figure~\ref{cusp-obook} allows for daisy substitutions  \cite{BMVHM}.
\end{proof}

\begin{remark} The lantern substitution in the monodromy factorization of a Lefschetz fibration is equivalent to symplectic rational blowdown of the $(-4)$ rational curve, \cite{EndoGurtas}: we replace the subgraph corresponding to the resolution of the cone on the $(-4)$ curve by its Milnor fiber with $b_2=0$.
The rational blowdown is seen in the resolution graph; our Lefschetz fibration argument demonstrates that we get a Stein (rather than just symplectic) filling as a result. Certain other monodromy substitutions, such as the daisy substitution, are equivalent  to blowdown along more complicated plumbings, \cite{HMVHM}. Moreover, 
the blowdown and the monodromy substitutions are equivalent as symplectic operations, \cite{GayMark}. It is quite plausible that  the fillings obtained via generalized rational blowdown 
can be obtained from P-resolutions of the corresponding singularity, but we will not pursue this direction in the present paper.   
\end{remark}

\section{Milnor fibers of sandwiched singularities}\label{s:sa}

The main result of this section is the following:

\begin{theorem}\label{th:finite}
Let $(Y,\xi)$ be the link of a sandwiched singularity equipped with its canonical contact structure.
Then $(Y,\xi)$ admits only finitely many Milnor fillings. More precisely, the set of Milnor fibers associated with all the smoothings
of all analytic realizations of the topological type identified by $Y$ is finite.
\end{theorem}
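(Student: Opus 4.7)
The plan is to invoke the de Jong--van Straten (dJvS) correspondence \cite{dJvS}, which provides a dimensional reduction between sandwiched surface singularities and plane curve germs. Concretely, every sandwiched singularity $(X,0)$ is encoded by a germ of a reduced plane curve $(C, 0) \subset (\mathbb{C}^2, 0)$ together with a decoration (a multiplicity assignment at certain infinitely near points), in such a way that the topological type of the link $Y$ corresponds to the topological (equisingularity) type of the decorated curve germ, and the analytic type of $(X,0)$ corresponds to that of the decorated $(C, 0)$. Under this dictionary, smoothings of $(X,0)$ are parameterized by \emph{picture deformations} of $(C, 0)$---flat 1-parameter deformations $C_t$ whose generic fiber has only ordinary nodes as singularities, taken up to an appropriate equivalence. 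Crucially, the diffeomorphism type of the Milnor fiber of a smoothing is determined by a finite combinatorial datum extracted from the corresponding picture deformation, essentially the pattern in which the singular points of $(C, 0)$ split into nodes of the nearby curve.

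Granting these reductions, the task is to show that as one ranges over all analytic structures in the fixed topological class and all picture deformations of each, only finitely many combinatorial picture-deformation types occur. For this I would appeal to two inputs. First, in sharp contrast with the wild parameter spaces of normal surface singularities, the moduli of plane curve germs in a fixed equisingularity class form a \emph{finite-dimensional} quasi-projective variety $M(\tau)$ in the classical sense of Zariski. Second, the combinatorial possibilities for picture deformations are uniformly bounded in terms of the topological type $\tau$: the number of nodes of a generic $C_t$ equals the $\delta$-invariant of $(C,0)$, which is a topological invariant, and the ways in which such nodes can arise from the splitting of the singularities of $(C,0)$ along its branches are bounded by the combinatorics of the equisingularity type.

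The main obstacle is to combine these two inputs rigorously. The strategy is to assemble all picture deformations of all analytic structures in $\tau$ into a total parameter space fibered over $M(\tau)$, and then stratify it by combinatorial type of picture deformation. Since the conditions that single out a prescribed combinatorial type are algebro-geometric (vanishing/non-vanishing of suitable discriminantal loci detecting ordinary nodes with a specified incidence structure), this stratification is constructible. Finite-dimensionality of the base, together with the uniform combinatorial bounds of the previous paragraph, then forces only finitely many strata to appear. The Milnor fiber diffeomorphism type is constant along each stratum, yielding finiteness of Milnor fillings.

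A softer alternative, which avoids the constructibility argument, is to first apply Theorem~\ref{bounds} to any Milnor filling of $(Y,\xi)$, obtaining uniform upper bounds on its topological invariants, and then argue directly that among picture deformations of any analytic structure of type $\tau$, only finitely many combinatorial types can realize such bounded invariants. This reduces the statement to a purely combinatorial enumeration, and bypasses the need to keep track of how combinatorial type varies with the analytic moduli.
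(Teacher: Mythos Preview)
Your overall architecture---de Jong--van Straten reduction, finite-dimensional parameter space for the equisingularity class, stratification by combinatorial type---matches the paper's. The genuine gap is the assertion that ``the diffeomorphism type of the Milnor fiber of a smoothing is determined by a finite combinatorial datum extracted from the corresponding picture deformation.'' You treat this as an input from \cite{dJvS}, but it is not: the paper explicitly flags (Subsection~\ref{subsect:smoothings}) that while the set $\I$ of incidence matrices is finite, it is \emph{not} clear that $\alpha^{-1}(I)$ is finite for a given $I\in\I$, i.e.\ that different decorated immersions with the same incidence matrix yield diffeomorphic Milnor fibers. One can imagine topologically distinct configurations of immersed discs in the ball with identical incidence data. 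Your ``softer alternative'' does not escape this: the finiteness of incidence matrices is already elementary from the constraints~\eqref{eq:1}, so Theorem~\ref{bounds} adds nothing, and you are still left needing the implication (combinatorial type)~$\Rightarrow$~(diffeomorphism type) that is unproved.

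The paper does \emph{not} prove that implication. Instead it establishes finiteness by a local triviality argument over the strata: one globalizes to projective curves of large degree, proves a \emph{uniform Milnor radius} over the equisingularity stratum $\C(\Gamma)$ via Whitney regularity (Lemma~\ref{lem:radius}), blows up the family of $m$-tuple singular points and free decoration points in families, and then applies an Ehresmann-type fibration theorem to the resulting $5$-tuple of spaces to conclude that the Milnor fibers form a $C^\infty$ locally trivial bundle over each connected component of $\cS_{\overline{m}}\cap\mathcal{N}$. Finiteness then comes from the finiteness of such components (a Noetherianity/quasiprojectivity argument), not from finiteness of incidence matrices. Your sketch omits all of this analytic content---uniform radius, family blow-ups, transversality, Ehresmann---which is precisely where the work lies. (Minor point: picture deformations allow ordinary $m$-tuples for all $m\geq 2$, not only nodes; the $\delta$-constraint is $\sum_j m_j(m_j-1)/2=\delta$.)
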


The proof has several steps. To describe smoothings and the corresponding Milnor fibers, 
we rely on  de Jong--van Straten's construction~\cite{dJvS}, which allows to reduce the question to certain deformations of curves.

\subsection{Smoothings of sandwiched singularities.} We closely follow \cite{dJvS}.
Let us fix a possible  topological type of a sandwiched singularity. It is characterized by its minimal resolution graph $\Gamma$, which is
a ``sandwiched graph''.
This means that $\Gamma$ is connected and it can be completed by several $(-1)$ vertices $\{F_i\}_{i=1}^r$  (and the supporting edges)
such that the completed graph is smooth: it can be blown down to the empty graph.  Let us
regard the graph  as the dual graph of an exceptional curve, and add to each $F_i$ a generic curvetta $\widetilde{C}_i$, which is a small complex disc transverse to $F_i$.  In the language of graphs, each curvetta is represented by an arrowhead supported by the $F_i$'s, in this way we get a graph $\widetilde{\Gamma}$. (For the correspondence between dual graphs and exceptional curves of resolutions see e.g. \cite{Nem,Nem-book}.)
Then, in this geometric presentation,  if we  blow down all the exceptional curves,
then  the image of $\cup_{i=1}^r \widetilde{C}_i$ will be   a plane curve singularity
$(C,0)=\cup_i (C_i,0)$, with $r$ local irreducible components,  such that $\widetilde{\Gamma}$ is its  embedded  resolution graph. 
There can be several combinatorial ways to complete the graph $\Gamma$ by the $(-1)$ vertices as above. We fix one such combinatorial choice,  
this will provide the graph  $\widetilde{\Gamma}$. Then, 
the equisingularity type of $(C,0)$ (that is, its embedded topological type)
 is independent of all the other choices; it depends only on~$\widetilde{\Gamma}$.

One can proceed in the opposite order as well. We start with a decorated isolated curve singularity $(C,l)$.
Here $(C,0)$ is an  isolated  plane curve singularity,  and each component $(C_i,0)$ of $(C,0)$ is decorated by an integer
$l_i$,  $i=1, \dots, r$.
 We consider the minimal embedded resolution graph
$\widetilde{\Gamma}_{min}$ of $(C,0)$
and then, for every $i$,  we blow up several times
 the intersection point of the strict transform of $(C_i,0)$
 with the exceptional curve
 (the number of blow ups is determined from $l_i$ and $\widetilde{\Gamma}_{min}$, for details see \cite{dJvS}).
In this way we obtain $\widetilde{\Gamma}=\widetilde{\Gamma}(l)$.
Then the non $(-1)$ vertices form the resolution graph $\Gamma$ of the constructed sandwiched singularity.  In particular, the equisingularity type of
$(C,0)$ and $l$  determines the topological type of the sandwiched singularity $(X,0)$, and the analytic type of $(C,0)$ and $l$ determines the
analytic type of $(X,0)$.

This shows that if we consider all the decorated analytic plane curve germs  with fixed equisingularity type, as determined by
$\widetilde{\Gamma}(l)$, then the de Jong--van Straten  construction provides all the analytic sandwiched singularities with topological type fixed by their
common resolution graph $\Gamma$.

%\subsection{}
Next, we connect smoothings of $(X,0)$ with deformations of $(C,l)$: picture deformations of $(C,l)$ provide smoothing components of $(X,0)$.
Let us fix a decorated germ $(C,l)$.
A  one-parameter  deformation of $(C,l)$  consists  of  a one-parameter $\delta$--constant deformation $\{C_t\}_t$ of $(C,0)$
and a flat deformation $\{l_t\}_t$ of the scheme associated with $l$. This deformation is a `picture deformation'  if for $t\not=0$ the divisor $l_t$ is reduced.
In such a case  $C_t$ for $t\not=0$ can have   only ordinary $m$--tuples  as singularities. A part of the (reduced) points of $l_t$
must sit in these  $m$--tuples $(m\geq 2$), and there might be some other free reduced points supported on smooth points of  $C_t$.
Such a picture deformation generates a smoothing of $(X,0)$, hence a Milnor fiber.

Let us recall an important property of a deformation
$(\Sigma,0)  \stackrel{\pi}{\rightarrow} (T,0)$  of $(C,0)$ over a smooth 1--dimensional germ $(T,0)$. Let $\widetilde{C}$  be the normalization of $(C,0)$
(as a smooth multigerm, which can be identified with $r$ small discs).
By a characterization of Teissier  \cite{T80} (see also
\cite{CL06}), a deformation $\pi$
is $\delta$--constant if and only if the normalization
morphism of $\Sigma$ has the form $\tilde{C}\times T\to \Sigma$, and it is
the \emph{simultaneous normalization} of the fibers of $\pi$.
In particular, if one has a $\delta$--constant embedded deformation of a germ of
\emph{plane} curve $(C,0)$ such that the general fiber $C_t$ has only
ordinary multiple points, then after a choice of a Milnor
representative of the morphism, the general fibers are
\emph{immersed discs} in the $4$--ball (in $\mathbb{C}^2$); their number is the number 
of local irreducible components of $(C,0)$. (See e.g. \cite[2.2]{NPP}.)

\subsection{Fixing the topological type and combinatorial data} \label{subsect:smoothings}
%Then,  
We consider the family of  all the decorated analytic plane curve germs  with fixed equisingularity type and constant decorations. Since the
 picture deformations of any fixed member of the family
provide all the smoothings of the corresponding sandwiched singularity,
we will obtain all the Milnor fibers   of all  analytic structures (supported by a fixed topological type) when we consider all the deformations of all the members.

If we consider a deformation of $(C,0)$ and 
 fix a small Milnor ball $B^\epsilon \subset \CC^2$ centered at the origin, then for $|t|$ small, 
the singularities of $C_t $ and the support of
$l_t$  sit in $B^\epsilon$, and $C_t$ intersects $\partial B^\epsilon$ transversely. For $t\not=0$
they provide a topological picture: the union of immersed  discs $C_{t,i}$ in $B^\epsilon$, which intersect each other and themselves 
 along certain $m$--tuples. From this embedded data $(C_{t,i}, l_{t,i})_i \subset B^\epsilon$ one constructs the Milnor fiber of the  smoothing:
 
 \begin{prop} \label{prop:djvs} \cite[Prop. 5.1]{dJvS} The Milnor fiber associated to a picture 
 deformation $(C_t, l_t)$ is obtained by blowing up $B^\epsilon$ at all the points $l_{t, i}$ and taking the complement 
  of the union of the strict transforms of $C_{t, i}$.
 \end{prop}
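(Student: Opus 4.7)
The plan is to produce the total space of the smoothing of $(X,0)$ attached to the picture deformation $(C_t, l_t)$ by running the de Jong--van Straten construction relatively over the parameter disc $T$, and then read off the generic fibre. This is the natural strategy because the very definition of a sandwiched singularity given earlier in the section constructs $(X,0)$ from the decorated curve by (i)~blowing up $B^\epsilon$ along the non-reduced scheme attached to $l$ and (ii)~contracting the non-$(-1)$ components of the resulting exceptional tree; so one should expect the generic-fibre analogue, with $l_t$ reduced and no non-trivial contractions, to give the Milnor fibre.

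First I would set up the families. The picture deformation provides a flat family $\mathcal{C}\subset B^\epsilon\times T$ of curves together with a flat relative length-$d$ subscheme $\mathcal{L}\subset\mathcal{C}$, where $d=\sum_i l_i$ (flatness of $\mathcal{L}$ is built into the definition of a picture deformation, the scheme length being preserved). Form the blow-up $\widetilde{\mathcal{B}}:=\mathrm{Bl}_{\mathcal{L}}(B^\epsilon\times T)\to T$, which is flat of relative dimension two. For $t\ne 0$, since $l_t$ is reduced with $d$ distinct support points, the fibre $\widetilde{\mathcal{B}}_t$ is the ordinary blow-up of $B^\epsilon$ at those points. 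For $t=0$ a local analytic computation on the surface $B^\epsilon$ shows that blowing up the non-reduced zero-dimensional scheme $l=\sum l_i C_i$ unfolds as the sequence of infinitely near blow-ups dictated by $\widetilde\Gamma(l)$; its exceptional locus is a tree of smooth rational curves with dual graph $\widetilde\Gamma(l)$, split into the subdivisor $E$ corresponding to the $\Gamma$-vertices and the extra $(-1)$-curves $F_i$. Next I would contract $E\subset\widetilde{\mathcal{B}}_0$ relatively over $T$: $E$ is negative definite by the sandwiched hypothesis, so the Grauert contraction is analytic, produces $(X,0)$ on the central fibre by construction, and is trivial on $\widetilde{\mathcal{B}}_t$ for $t\ne 0$ because the specialisation $\mathcal{E}$ of $E$ in the total space has empty restriction to generic fibres (the components of $E$ arise from the degeneration of distinct reduced points into infinitely near points). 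The resulting family $\mathcal{X}\to T$ is then a flat deformation of $(X,0)$ with smooth generic fibre, i.e.\ a smoothing; the de Jong--van Straten dictionary identifies it with the smoothing attached to $(C_t, l_t)$.

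For $t\ne 0$ the formula for the Milnor fibre is then immediate: $\mathcal{X}_t=\widetilde{\mathcal{B}}_t\setminus(\text{strict transform of }C_t)$, where one removes the strict transform because in $\widetilde{\mathcal{B}}_0$ it meets only the $F_i$-part of the exceptional divisor (its other meeting locus being absorbed into the singular point of $X$ after contracting $E$), so relatively over $T$ it plays the role of ``boundary at infinity''; this is exactly the blow-up of $B^\epsilon$ at the reduced points of $l_t$ with the strict transforms of the $C_{t,i}$ removed. The main obstacle in this plan is the relative contraction step: one must verify that the Grauert contraction of $E\subset\widetilde{\mathcal{B}}_0$ genuinely extends to a morphism from $\widetilde{\mathcal{B}}$ to a flat family $\mathcal{X}\to T$ with correct central fibre, and that the resulting $\mathcal{X}\to T$ is isomorphic to the smoothing component singled out by the picture deformation in the de Jong--van Straten correspondence. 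This is a deformation-theoretic verification that carefully tracks how the non-reduced specialisation of $\mathcal{L}$ interacts with negative-definite contractions in families; everything else in the argument is a local analytic computation or a direct comparison of fibres.
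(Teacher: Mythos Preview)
The paper does not prove this proposition; it is quoted from de Jong--van Straten \cite[Prop.~5.1]{dJvS} and used as a black box throughout Section~\ref{s:sa}. There is no in-paper argument to compare your attempt against.

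Your sketch follows the de Jong--van Straten strategy and you correctly isolate the relative contraction as the main technical obstacle. There is, however, a real gap in your last paragraph. After contracting $E$ relatively you obtain a family $\mathcal{X}\to T$ whose generic fibre $\mathcal{X}_t$ is, by your own construction, all of $\widetilde{\mathcal{B}}_t$ (nothing is contracted for $t\ne 0$), \emph{not} $\widetilde{\mathcal{B}}_t$ minus the strict transforms. The Milnor fibre of the \emph{germ} $(X,0)$ is not the whole surface $\mathcal{X}_t$: it is the piece of $\mathcal{X}_t$ lying over a small Milnor neighbourhood of the singular point in $\mathcal{X}_0$. Identifying that piece with ``blow up $B^\epsilon$ at the points of $l_t$ and delete the strict transforms of $C_{t,i}$'' is exactly the content of the proposition, and it requires tracking how a Milnor tube around $0\in\mathcal{X}_0$ (equivalently, a tubular neighbourhood of $E\cup\bigcup_i F_i\cup\bigcup_i\widetilde{C}_i$ in $\widetilde{\mathcal{B}}_0$) deforms to the nearby fibre and why its complement there has boundary the link $Y$. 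Your phrase ``plays the role of boundary at infinity'' gestures at this but does not supply the argument.
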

Since we are interested in fillings of the link $(Y, \xi)$, we usually consider the compact version of the Milnor fiber, which is the complement of small tubular neighborhoods of the strict transforms in a closed Milnor ball. (Technically, this construction produces a manifold with corners; we will assume that the corners are smoothed.)

Combinatorially, the intersections of the immersed discs $C_{t,i}$  and the position of the points $l_{t,i}$ are encoded by an incidence matrix of size
$r \times \#\{$points$\}$ with non-negative entries
$v_{ij}$, where $v_{ij} $ is  the multiplicity of the  branch $C_{t,i}$ at the point $p_j$. These  entries satisfy several identities  (cf. page 484  of \cite{dJvS}), e.g., for all $i$:
\begin{equation}\label{eq:1}
\sum_j v_{ij}(v_{ij}-1)=2\delta(C_i,0), \ \ \mbox{or} \ \ \ \sum_j v_{ij}=l_i.
\end{equation}

%\subsection{}
Denote the set of possible decorated embedded topological types   $(C_{t,i}, l_{t,i})_i \subset B^\epsilon$
by  $\E$, the set of incidence matrices by $\I$ and
the set of Milnor fibers  by $\F$  (all realized by analytic deformations).
Then the construction \cite[Prop. 5.1]{dJvS} provides a map $\beta : \E\to \F$, and  $\alpha:\E\to \I$ associates the combinatorial incidence matrix $(v_{ij})_{ij}$ to a decorated embedded topological type  $(C_{t,i}, l_{t,i})_i \subset B^\epsilon$ (i.e. to a
picture deformation), cf. \cite[4.3]{dJvS}.

Let us make the following observation regarding the maps $\alpha$ and $\beta $ above.
Since % $\Gamma$ and
the graph $\widetilde{\Gamma}(l)$ is fixed, 
hence the embedded topological type of $(C,0)$ is fixed,
its delta invariant $\delta$, and Milnor number $\mu$ are fixed as well. Furthermore, each $l_i$ is also fixed.  This shows that
the set $\I$ is finite. Indeed, we can define the set of possible combinatorial incidence matrices $\I_{comb}$ satisfying (say) the identities as in
(\ref{eq:1}), then $\I_{comb}$ is finite by  (\ref{eq:1}),
and  the analytically realized matrices $\I$ form a subset of $\I_{comb}$.

What is not immediate is that the cardinality of $\alpha^{-1}(I)$ ($I\in\I$)  is finite. In principle, we can imagine many different decorated  immersed discs
with the very same combinatorial incidence matrix. Even more, starting from a concrete immersion, one can even try to construct by surgeries
many other ones  with unmodified incidence matrix and unmodified boundary of the filling.

On the other hand, it is worth pointing out that
the main result of  \cite{NPP}  implies  that for $I_1, \, I_2\in\I$, $I_1\not=I_2$, $\beta(\alpha^{-1}(I_1))\cap \beta(\alpha^{-1}(I_2))=\emptyset$.
Hence,  Milnor fibers corresponding to  different incidence matrices are different  (up to diffeomorphism  fixing certain boundary data). In particular, $\#\F \geq \#\I$. We wish to complete this lower bound by proving that $\F$,  in fact, is finite.

\subsection{A parameter space of curves} \label{subsect:space-of-curves}
We would like to consider possible analytic structures of decorated curve germs $(C,l)$ with fixed topology $\widetilde{\Gamma}(l)$.
To construct a parameter space,  it is helpful  to globalize the local curves  into projective ones. It is known that for any plane curve singularity $(C,0)$, there exists
$d_0$ sufficiently large (depending on the topological type of  $(C,0)$) such that for any
$d\geq d_0$  there exists a projective plane 
curve of degree $d$ with only one singular point analytically equivalent to $(C,0)$, see e.g.
\cite[Corollary 4.5.15]{GLS:curves} or \cite[subsection 1.1.3.5]{GLS:curves}.
In fact, even the irreducibility of the projective curve is guaranteed
\cite[Theorem 4.6.4]{GLS:curves}.

Next note that if $(C,0)$ has Milnor number $\mu$ (determined by its embedded topological type) then the equation of $(C,0)$ is
$(\mu+1)$--determined. This means that for any $k\geq \mu+1$, the analytic type is determined by the $k$--jet of the equation, see e.g. page 14 of \cite{Arnold}.
Furthermore, the universal deformation space of $(f=0,0) =(C,0)$  has finite dimension $\tau\leq \mu$, and it has the form
   $f+\sum_{j=1}^\tau t_jg_j $, where $t_j\in(\CC,0)$ and
   $\{g_j\}_j$ induce a basis of the Tjurina algebra $T(f):=\CC\{x,y\}/(f, \partial f/\partial x, \partial f/\partial y)$ of dimension $\tau$.
   Note that if we add to  $g_j$ a monomial of degree larger than $\mu$, then its class in $T(f)$ will not change.
   This shows that for any analytic type $(C,0)$ one can find $ \tau$ polynomials $g_j$, all with degrees bounded by  $\mu$, such that
  $f+\sum_{j} t_jg_j $ is a versal deformation of $f=0$, hence it ``contains''  all the deformations of $(C,0)$.

Let us consider now the space $\C =\CC\PPP^M$ of all projective curves in $\CC\PPP^2$ of degree $d$ for  $d$ is {\it sufficiently large} ($M=d(d+3)/2$). By the above discussion
any local analytic singularity  with graph $\widetilde{\Gamma}(l)$ appears as singularity of a projective curve $\bar{C}\in \C$, and also, the neighborhood of
$\bar{C}$ in $\C$ serves as  a versal deformation of $\bar{C}$. In this way, all the local analytic types  of plane singular germs with topology
identified by  $\widetilde{\Gamma}(l)$ and all their deformations can be realized inside $\C$.

Let $\C(\Gamma)$ be the set of all curves $\bar{C}\in \C$ such that $\bar{C}$ has exactly one singular point  of embedded topological type
$\widetilde{\Gamma}(l)$. Then $\C(\Gamma)$ is  a non-empty  quasiprojective variety in $\C$ (see
e.g. pages 59--61 in \cite{GLS:curves}).
By \cite{WahlES}, for any fixed $(C,0)\subset (\CC^2,0)$, the strata $ES$  parametrising the equisingular deformations  of $(C,0)$ inside the space of
semiuniversal deformation space  is a smooth subvariety. Since our parameter space is versal for any $(C,0)$, the same property holds: $\C(\Gamma)$
is a smooth quasiprojective  subvariety of $\C$ (see also page 61 of \cite{GLS:curves}). Moreover, by \cite[Theorem 4.6.4]{GLS:curves} $\C(\Gamma)$ is
even irreducible (a fact which will not  be used here).

Consider the universal family $\UU= \{(\bar{C},p)\in\C\times \CC\PPP^2\,:\, p\in \bar{C}\}$,
 the first  projection  $q:\C\times \CC\PPP^2\to \C$
   given by $(\bar{C},p)\mapsto \bar{C}$,  and
  its restriction to $\UU$, $q_\UU:\UU\to\C$.
   Set $\UU(\Gamma):= q_\UU^{-1}(\C(\Gamma))\subset \UU$.
Then the restriction $q_\UU:\UU(\Gamma)\to \C(\Gamma)$ admits an algebraic section,
 $\bar{C}\mapsto (\bar{C},p_{\bar{C}})$, where $p_{\bar{C}}$ is the unique singular point of $\bar{C}$.  Let us denote by $\PP(\Gamma)$ this  image $(\bar{C}, p_{\bar{C}})_{\bar{C}\in \C(\Gamma)}$ in $\UU(\Gamma)$.  Then
 $\PP(\Gamma)\to \C(\Gamma)$ is an isomorphism and $\PP(\Gamma)$ is a smooth subvariety of $\C\times \CC\PPP^2$.

 \subsection{Uniform Milnor radius} Now, a decorated curve germ $(C,l)$ with fixed topology $\widetilde{\Gamma}(l)$ corresponds to a point $C \in \C(\Gamma)$ with appropriate decoration. 
 Given a choice of picture deformation  $(C_t, l_t)$ of $(C, l)$, the Milnor fiber of the corresponding smoothing is obtained from a Milnor ball $B^\epsilon$ of $C$ as in Proposition~\ref{prop:djvs}. The radius $\epsilon$ of the Milnor ball a priori depends on $C$. 
 We will need to be able to  choose a {\em uniform} radius in a neighborhood of $C \subset \C(\Gamma)$; then a uniform positive radius can be found over any subset of $\C(\Gamma)$ with compact closure.  Existence of uniform radius is  guaranteed by the following 
 local argument (it is ``classical'', but far from trivial).
 
 \begin{lemma} \label{lem:radius} Fix  $C \in \C(\Gamma) \subset \CC\PPP^M$. Then there is a (Euclidean) neighborhood $V \subset \C(\Gamma)$ of $C$ and $\epsilon_0>0$ such that for every $\bar{C} \in V$ and $\epsilon \leq \epsilon_0$, the curve $\bar{C}$ intersects $\d B^\epsilon(p_{\bar{C}})$ transversely.       
 \end{lemma}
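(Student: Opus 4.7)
The plan is to prove a uniform Milnor radius statement for the equisingular family parameterized by a neighborhood of $C$: reduce to local coordinates, reformulate non-transversality as a real-analytic equation, and propagate Milnor's theorem from the single curve $C$ to a neighborhood using constancy of the embedded topological type.

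Using that $\PP(\Gamma) \to \C(\Gamma)$ is an algebraic isomorphism (as established in the previous subsection), the singular point $p_{\bar{C}}$ depends holomorphically on $\bar{C}$ near $C$. Pick a holomorphic chart on $\CC\PPP^2$ centered at $p_C$ and perform a $\bar{C}$-dependent translation so that $p_{\bar{C}} = 0 \in \CC^2$ for every $\bar{C}$ in a small Euclidean neighborhood $V_0 \subset \C(\Gamma)$ of $C$. On a bidisc $B^\eta \subset \CC^2$, the curve $\bar{C}$ is the zero set of a holomorphic function $f_{\bar{C}}(z,w)$ depending holomorphically on $\bar{C} \in V_0$, with $f_C$ having an isolated singularity at the origin. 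The sphere $\partial B^\epsilon(0)$ fails to be transverse to $\bar{C}$ at a smooth point $(z,w) \neq 0$ if and only if $(z,w)$ is a critical point of $\rho = |z|^2 + |w|^2$ restricted to $\bar{C}_{sm}$, equivalently
\[
G_{\bar{C}}(z,w) := \bar{z}\,\partial_w f_{\bar{C}}(z,w) - \bar{w}\,\partial_z f_{\bar{C}}(z,w) = 0.
\]
Let $\Sigma \subset V_0 \times (B^\eta \setminus \{0\})$ be the closed real semianalytic subset cut out by $f_{\bar{C}} = G_{\bar{C}} = 0$. Milnor's theorem applied to $f_C$ provides $\epsilon_0 > 0$ with $\Sigma \cap (\{C\} \times \bar{B}^{\epsilon_0}) = \emptyset$, so the lemma reduces, in this local chart, to finding a Euclidean neighborhood $V \subset V_0$ of $C$ with $\Sigma \cap (V \times \bar{B}^{\epsilon_0}) = \emptyset$.

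The main step is propagating this isolation from $\{C\}$ to a neighborhood. The family $\{\bar{C}\}_{\bar{C} \in V_0}$ is equisingular: every member has the same embedded topological type $\widetilde\Gamma(l)$ at the origin. The desired conclusion is the classical uniform Milnor radius for equisingular families of plane curve singularities. One clean route is via Whitney stratification: stratify the total space $\bigcup_{\bar{C} \in V_0}(\{\bar{C}\} \times \bar{C}) \subset V_0 \times \CC^2$ by the origin section $V_0 \times \{0\}$ and its complement, invoke equisingularity (L\^e--Teissier) to verify that Whitney's condition (b) holds along the origin section after possibly shrinking $V_0$, and apply Thom's first isotopy lemma to obtain a $V$-fiberwise topological trivialization of a bidisc neighborhood of the origin section under which the spheres $\partial B^\epsilon(0)$ correspond to each other; smooth transversality of $\partial B^{\epsilon_0}(0)$ with $C$ from Milnor then propagates to all $\bar{C} \in V$. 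Alternatively, one may use a simultaneous embedded resolution of the equisingular family, governed by the common resolution graph $\widetilde\Gamma(l)$, and read uniform transversality of small spheres off the resolved model.

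The main obstacle is precisely this propagation: \emph{a priori}, the critical locus of $\rho$ on nearby $\bar{C}$ could accumulate to the origin as $\bar{C} \to C$, so one must use the constancy of the embedded topological type in an essential way. All the ingredients (Whitney stratification, Thom's isotopy lemma, simultaneous resolution) are classical, but none of them are elementary, and pinning down the precise equisingular-family formulation in the literature is where most of the work will lie.
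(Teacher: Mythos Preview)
Your overall strategy coincides with the paper's: both invoke that the equisingular family is Whitney regular along the singular-point section (the paper via Teissier's $\mu^*$-constancy criterion, you via L\^e--Teissier). The key difference is in how this Whitney regularity is converted into the uniform radius.

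The paper argues directly by contradiction. Suppose no uniform radius exists; then there are curves $\bar C_n\to \bar C_\infty\in\C(\Gamma)$ and radii $\epsilon_n\to 0$ with $\bar C_n$ tangent to $\partial B^{\epsilon_n}(p_{\bar C_n})$ at some point $p_n\to p_{\bar C_\infty}$. The secant line $\overline{p_{\bar C_n}\,p_n}$ is the radial direction, hence orthogonal to $T_{p_n}\partial B^{\epsilon_n}$, which contains $T_{p_n}\bar C_n$ by tangency. Thus the limiting secant cannot lie in the limiting tangent plane, contradicting Whitney condition~(b). This is short and uses nothing beyond the definition of~(b).

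Your route through Thom's first isotopy lemma has a genuine gap at the last step. The isotopy lemma produces only a \emph{topological} trivialization of the pair (ambient space, curve family); it does not send round spheres to round spheres, and a homeomorphism does not transport smooth transversality. So the sentence ``smooth transversality of $\partial B^{\epsilon_0}(0)$ with $C$ from Milnor then propagates to all $\bar C\in V$'' does not follow. You correctly identified the obstacle (critical points of $\rho|_{\bar C}$ accumulating at the origin as $\bar C\to C$), but the tool you chose does not resolve it. The paper's secant-line argument is precisely the missing link: it shows that such accumulation is exactly a violation of Whitney~(b), with no appeal to isotopy or resolution.
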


 \begin{proof} First, we note that for a deformation of an isolated plane curve singularity  with  constant embedded topological type,
  or with  constant Milnor number,   the multiplicity is also  constant. 
This happens  because the topological type determines the 
   Puiseux pairs of the components, hence the multiplicity as well. 
Then, Teissier's $\mu^*$--sequence (in this curve case this consists of the  Milnor number and the multiplicity)  is constant, which shows that 
locally over small balls of $\PP(\Gamma)$, 
the total space $\UU$ of the family is Whitney equisingular along the  stratum $\PP(\Gamma)$ of singular points \cite{TeissierW}. On the other hand,  
 non-constant Milnor radius violates Whitney condition (b): take as a sequence of secants the origin joined with the tangency point with the sphere. Indeed, assume that a uniform Milnor radius does not  exists. This means that there exists a sequence $\{\bar{C}_n\}_n $ in $\C(\Gamma)$ with limit
$\lim_n\bar{C}_n=\bar{C}_\infty \in\C(\Gamma)$,  and a sequence $\{\epsilon_n\}_n$, $\epsilon_n>0$, $\lim_n \epsilon_n=0$, such that 
$q^{-1}(\bar{C}_n)$ is tangent to $\partial B_{\epsilon_n}$ at a point $p_n $  with $\lim_n p_n=p_{\bar{C}_\infty}$. Then one can take the secant
lines $\overline{p_{\bar{C}_n}p_n}$. Since the tangent space $T_{p_n}q^{-1}(\bar{C}_n)$ is contained in $T_{p_n}\partial B_{\epsilon_n}$, 
we have the ortogonality $\overline{p_{\bar{C}_n}p_n} \perp T_{p_n}q^{-1}(\bar{C}_n)$. In particular,
the inclusion $\lim_n \overline{p_{\bar{C}_n}p_n}\subset \lim_n  T_{p_n}q^{-1}(\bar{C}_n)$, formulated by Whitney condition (b),  cannot hold.

 Hence, constancy of the $\mu^*$--sequence implies the existence of the uniform Milnor radius.  For more details see also  page 261 and Theorem 5.3.1 
 in \cite{BG} and the corresponding references therein. 
 
 It may be helpful to note that the lemma admits an elementary proof if all the irreducible components of the germ $C$ are smooth (in the de Jong--van Straten setting, such decorated germs correspond to rational surface singularities with reduced fundamental cycle). In this case, existence of uniform Milnor radius follows simply from smoothness of individual branches and transversality.  
 \end{proof}

In the subsequent steps, we will consider picture deformations of the curves $C$ inside these Milnor balls. Different combinations of $m$-tuple singularities of 
picture deformations give a stratification of the corresponding space of curves. We will show 
that the corresponding Milnor fibers look the same within a component of a fixed stratum 
(more precisely, they form a $C^{\infty}$ locally trivial fibration). Finiteness of the set of Milnor fillings will then follow from finiteness of the related stratification.

\subsection{Curves with $m$-tuple singularities: stratification}
Consider
$$\cS=\{\bar{C}\in\C\,:\, \bar{C} \ \mbox{has only $m$-tuple type singularities}\}.$$
The space $\cS$ has an interesting stratification, it
decomposes into several disjoint strata  $\cS_{\overline{m}}$ according to the type
 $\overline{m}=(m_1, m_2, \dots)$ of the singular points.
In fact, we consider only those strata  of $\cS$ which appear in the neighborhood of $\C(\Gamma)$, that is, their closure intersect $\C(\Gamma)$ nontrivially.
Furthermore, we assume for any such component that they  represent  $\delta$--constant deformations of certain points of $\C(\Gamma)$.  In other words,
there exists a real analytic path $\gamma:[0, \tau]\to\C$, $\gamma(0,\tau]\in \cS_{\overline{m}}$, $\gamma(0)\in \C(\Gamma)$ such that $\gamma(t)$ is a $\delta $--constant deformation of $\gamma(0)$, with $m$--tuple singularities of type $(m_1,m_2,\ldots)$, so that  the type is constant for $t\not=0$.
This shows that the constant delta value of the strata $\C(\Gamma)$ bounds the possible collection
$(m_1, m_2, \ldots)$, since $\sum_i m_i(m_i-1)/2=\delta$.

Since all the $m_i$ values are bounded, if we choose $d$ (the degree of the curves from $\C$)
sufficiently large then each $\cS_{\overline{m}}$  is quasiprojective, and in fact, it is smooth, cf. \cite[Ch. 4]{GLS:curves}. Moreover, the number of irreducible, or connected components of any  $\cS_{\overline{m}}$  is finite.

If $(C,0)$ itself is an $m$--tuple, then $\C(\Gamma)$ is such a stratum $\cS_{\overline{m}}$, with only one entry $m_i$ which is $m=r$. 

In general, the varieties
$\{\cS_{\overline{m}}\}_{\overline{m}}$ form an interesting stratification, e.g.
$\cS_{(3)}$ sits in the closure of $\cS_{(2,2,2)}$. But we emphasize again: $m$--tuple points parametrized by a fixed component of  $\cS_{\overline{m}}$
do not merge, do not split, and their number is constant. 

It can happen that the closure of a certain $\cS_{\overline{m}}$ contains all $\C(\Gamma)$. This means that any analytic germ of $\C(\Gamma)$ admits a $\delta$--constant deformation of that type $\overline{m}$. But it can also happen that the closure of a certain $\cS_{\overline{m}}$ contains only a proper subspace of  $\C(\Gamma)$. This means that only certain restricted  analytic germs of $\C(\Gamma)$ admit  $\delta$--constant deformations of that type $\overline{m}$. See page 501 in \cite{dJvS} for a concrete example. 

 \subsection{Local triviality over strata} \label{subsect:loctriv} We focus on a stratum   $\cS_{\overline{m}}$ for a fixed 
 choice of $\overline{m}$. We will consider two cases, as the setup is slightly different: (1) $\cS_{\overline{m}} = \C(\Gamma)$ and (2) 
 $\cS_{\overline{m}}\not= \C(\Gamma)$. In the first case,  $(C,0)$ must be an $m$--tuple, and
$\overline{m}=(m)$, where  $m=r$. In the second case, $\cS_{\overline{m}}\cap \C(\Gamma)=\emptyset$, but  the intersection of the closure of $\cS_{\overline{m}}$ with $\C(\Gamma)$ must be non-empty if $\cS_{\overline{m}}$ contains any picture deformations of a decorated curve from $\C(\Gamma)$.

Take $C \in \C(\Gamma)$, and assume for convenience that $p_{C}=0$. Let $\epsilon>0$ and let a neighborhood
$V\subset \C(\Gamma)$ of  $C$ be as in Lemma~\ref{lem:radius}, so that $B^\epsilon= B^\epsilon(0)$ is a Milnor ball for every curve in $V$. We fix this $B^\epsilon \subset \CC^2 \subset \CC\PPP^2$. 

In Case (1), we consider the projection 
\begin{equation} \label{q-proj1}
(V \times B^\epsilon,  (V  \times B^\epsilon) \cap \UU)  \stackrel{q}{\to} V. 
\end{equation}

In Case (2), we can find a neighborhood $W= W_{C}$ of $C$ in the space $\C$ such that for every $\bar{C} \in W$, all singular points of  $\bar{C}$ are contained in the interior of $B^\epsilon$, and the intersection of $\bar{C}$ and $\d B^\epsilon$ is transverse. (Here and below, $B^\epsilon$ is  the {\em closed} ball.)
We then consider the projection
\begin{equation} \label{q-proj}
((W \cap \cS_{\overline{m}})  \times B^\epsilon,  ((W \cap \cS_{\overline{m}})  \times B^\epsilon) \cap \UU)  \stackrel{q}{\to} W \cap \cS_{\overline{m}}.  
\end{equation}

In each case, the fiber over the curve  $C_t$ representing a point in $V$ resp. $W\cap \cS_{\overline{m}}$  is $(B^\epsilon, C_t \cap B^\epsilon)$.  We would like to show that the fibration \eqref{q-proj1} resp.  \eqref{q-proj}
is locally trivial in the appropriate sense. More precisely, for a picture deformation $C_t$ endowed with a decoration, we need to understand the fibration obtained by the procedure of constructing smoothings and Milnor fibers of the corresponding surface singularities, as in Proposition~\ref{prop:djvs}. Case (1) is the easier of the two, and we will comment on it after treating Case (2) 
in detail.  We now assume that $\cS_{\overline{m}}\not= \C(\Gamma)$, and $W$ is 
chosen as above.

By our assumptions,  every $C_t \in W \cap \cS_{\overline{m}}$ has the same  $\delta$ as $(C,0)$, all its singular points are $m$--tuples,
these  singular points are in  $B^\epsilon$
 and $C_t$  intersects $\d B^\epsilon$  transversely.

Then $C_t$ can be completed to a picture deformation $(C_t,l_t)$. The points of $l_t$ supported
at the singular points are determined by $C_t$, the other (free)
points,  still sitting in the interior of  $B^\epsilon$,
 are distributed on the regular part
of the components $C_{i,t}$. The number of free points on each $C_{i,t}$, say
$l_{i,t}'$, is fixed from the combinatorics. Indeed, if $p_j$ are the singular points of $C_t$ and
$v_{ji}'$ is the multiplicity of $C_{t,i}$ at $p_j$, then
$l_{i,t}'=l_i-\sum_j v_{ij}'$.

Each completion 
%with decorations (of a type identified by  the stratum  $\cS_{\overline{m}}$) 
 provides a picture deformation, hence an element of $\E$  (decorated immersed discs in $B^\epsilon$). By Proposition~\ref{prop:djvs},
if we blow up $B^\epsilon$ at the points of the  support  of $l_t$ and delete the strict transform of the curves $\{C_{i,t}\}_i$ we get the Milnor fiber of 
the corresponding picture deformation (or of the corresponding smoothing).

Regarding these blow ups, we proceed in two steps. First, we blow up the singular points of $C_t$. As a byproduct of this step, we will also prove the appropriate 
local triviality of the map  $q$  from \eqref{q-proj}. Then, as a second step,  we blow up the free points as well. Denote the set of singular points of $((W \cap \cS_{\overline{m}})  \times B^\epsilon) \cap \UU$ by $\rm{Sing}_{\overline{m}}$.

In order to prove the local triviality of  $q$  from 
\eqref{q-proj}, for any point $\bar{C}_0$ of the base $B_{\overline{m}}$ we have to find a 
 small  ball  (or a contractible open set) $W' \subset W \cap \cS_{\overline{m}}$ in the base space which contains $\bar{C}_0$, such that  $q$ is trivial over $W'$.
 We choose $W'$ as follows. 

  By construction,  the number of singular points in any fiber $q^{-1}(\bar{C})\cap B^\epsilon$  is the same, say $k$. 
In particular, the restriction $ q: {\rm Sing}_{\overline{m}}\to W \cap \cS_{\overline{m}}$ is a regular $k$--covering.  We choose $W'$ contractible and 
so small that  this regular (analytic) covering over $W'$ is trivial.

We wish to show the triviality of 
\begin{equation} \label{eq:triv-Wprime}
(W'  \times B^\epsilon,  (W' \times B^\epsilon) \cap \UU)  \stackrel{q}{\to} W'.  
\end{equation}

 From the choice of $W'$, the covering
 $(W' \times B^\epsilon) \cap {\rm Sing}_{\overline{m}}$  over $W'$ is trivial, hence it has $k$ 
regular sections 
$s_i: W' \to (W' \times B^\epsilon) \cap {\rm Sing}_{\overline{m}}$, 
$1\leq i\leq k$, so that $q\circ s_i={\rm id}_{W'}$.

Since $W'$ is smooth, the image 
$s_i(W') $ is also smooth, and $s_i$ and $q$ realize   complex analytic isomorphisms between $W'$ and $s_i(W')$.
Now,  $\cup_i s_i(W')$ is a smooth complex analytic subspace of complex codimension two in a smooth complex analytic space 
$W'  \times B^\epsilon$. We blow up this subspace and 
denote the total space by $\BB(W')$, the blow up projection by $b$,  
$q\circ b=\widetilde{q}$, the exceptional divisor by $\widetilde{E}$, the strict transform of $(W' \times B^\epsilon) \cap \UU$ by 
$\widetilde{St}(W')$. Since all singular points of the curves from $W'$ are $m$-tuples, the strict transform $\widetilde{St}(W')$ is smooth, and it intersects the smooth subspace $\widetilde{E}$ transversely. 

Over a point $p\in W'$ representing a curve with $m$-tuple singularities, $\widetilde{q}^{-1}(p)$ is the blow up of $B^\epsilon$ at the singular points of the curve $p$,  
$\widetilde{q}^{-1}(p)\cap \widetilde{E}$
consists of $k$ copies of  $\CC{\mathbb P}^1$ which are the 
exceptional curves of this blowup of $B^\epsilon$, 
$\widetilde{q}^{-1}(p)\cap \widetilde{St}(W')$ is the strict transform of the curve $p \cap B^\epsilon$,  and 
$\widetilde{q}^{-1}(p)\cap \widetilde{St}(W')\cap \widetilde{E}$ consists of $\sum_jm_j$ points corresponding to the type of the $m$--tuples.
Furthermore, $\widetilde{St}(W')$ intersects $
\d\BB(W')$ transversely as well.

Then, using the transversality and an Ehresmann-type argument (see below), we obtain  that 
\begin{equation}\label{loctriv-W}
(\BB(W'), \partial \BB(W'), \widetilde{E}, \partial \BB(W')\cap \widetilde{St}(W'), \widetilde{E}\cap \widetilde{St}(W'))\to W'
\end{equation}
is a $C^\infty$ locally trivial fibration of 5--tuples of spaces over $W'$. But  $W'$ being contractible, this is in fact a trivial smooth  fibration. 

By an Ehresmann-type argument, we mean the following. The classical Ehresmann theorem asserts that a proper submersion between two closed smooth manifolds is a $C^\infty$ locally trivial fibration \cite[Section 8.5]{Dundas}. There are versions of this result for manifolds with boundary (the restriction of the map to the boundary is required to be a submersion), as well as for the relative case (see for example \cite[Theorem 1.2.16]{Lipschitz-Geom}, \cite{MOflow}). 
Because the construction involves the projection $\tilde{q}$ which is tautologically a submersion, and transversality ensures that all intersections are smooth submanifolds, it is not hard to see that the Ehresmann hypotheses are satisfied for the  5-tuple of \eqref{loctriv-W}, as well in our subsequent applications of a similar argument. We leave the details to the reader. 

A similar argument works in Case (1), but since each curve $\bar{C} \in V$ is an $m$-tuple, there is only one singular point to blow up in each fiber. We obtain a $C^\infty$ locally trivial fibration of 5--tuples of spaces over $V$,
\begin{equation}\label{loctriv-V}
(\BB(V), \partial \BB(V), \widetilde{E}, \partial \BB(V)\cap \widetilde{St}(V), \widetilde{E}\cap \widetilde{St}(V))\to V. 
\end{equation}

Blowing down the locally trivial fibrations \eqref{loctriv-V} and \eqref{loctriv-W}, we
can get local triviality statements for \eqref{q-proj} and \eqref{q-proj1}. 
 The local triviality for \eqref{q-proj1} means the following.  The first entry of the total space of the pair is smooth (and the fiber, the ball, as well), so in this case the usual definition works (of course, this is just a product fibration). 
The fiber of the second entry consists of $m=r$  embedded  discs with $m$--tuple singularities. 
Here, at the smooth points it behaves as a usual smooth locally trivial fibration, and at the singular points all the branches (automatically smooth) behave
as  smooth locally trivial  fibration,  and additionally, the tangent lines at the singular points remain distinct. (This follows from local triviality of  \eqref{loctriv-V} and properties of blow-down.) Local triviality of  \eqref{q-proj} holds in a similar sense: the fiber of the second entry consists of the union of immersed discs, we have the usual $C^\infty$ local triviality at smooth points, and at each of the $m$-tuple singularities each branch individually behaves as a smooth locally trivial  fibration, and the tangent lines to the branches are distinct. In particular, this implies that the fibrations \eqref{q-proj} and \eqref{q-proj1} are topologically trivial, and away from the singularities they are smoothly trivial. Note that they {\em cannot} be smoothly trivial everywhere, in general: it is well known that even in the case of a 4-tuple of lines, there is an additional  smooth invariant, the cross ratio.

\subsection{Back to  the decorations of the picture deformation.} To complete the construction of Milnor fibers
from a picture deformation $C_t$, we need to perform the additional blowups at the free points. We proceed as above, blowing up a family of free points in a fibration (we explain Case (2), and Case (1) is similar).

Let us fix $W \subset W \cap \cS_{\overline{m}}$  as above (and the fibration over it with all its properties already proved). Let $C_t$ and $C_{t'}$ be the curves corresponding to two points of $W'$.
Let us complete these curves  with decorations, that is with free points
$\{a_{t,j}\}_j\subset C_t$ and $\{a_{t',j}\}_j\subset C_{t'}$. (The other decoration points are the singular points, discussed in the previous subsection.)
The number of free points on $C_{i,t}$ and $C_{i,t'}$ agree: it is $l'_{i,t}$.

We claim that there exists smooth sections
$$s_{i,j}:W' \to (W' \times B^\epsilon) \cap \UU, \quad 1\leq j\leq l'_{i,t}$$
such that $s_{i,j}(t'')$ are free decoration points on the curve $q^{-1}(t'')\cap \UU$,
and over the points $t$ and $t'$ we have 
 $\{s_{i,j}(t)\}_j=\{a_{t,j}\}_j\subset C_t$ and $\{s_{i,j}(t')\}_j=\{a_{t',j}\}_j\subset C_{t'}$. 

This follows from the fact that for any $i$, the space $(C_{i,t}\cap B^\epsilon) \setminus {\rm Sing }[C_{i,t}]$  is a smooth connected manifold embedded smoothly in the smooth ambient space, and small tubular neighborhoods of these manifolds  do not intersect each other.
``Moving'' the free points on this smooth curve can be realized by a smooth flow which is identity in the complement of these tubular neighborhoods, and
flows nontrivially in the tubes  
(similarly to e.g. \cite[pages 22-23]{Milnor}).
Let $s_{i,j}(W')$ denote the smoothly embedded  image of $s_{i,j}$.

We wish to make the following comment. In the case of singular points of $C_t$, the sections $s_i$ ($1\leq i\leq k$) are complex analytic (or algebraic), 
but 
the above construction of the section of $s_{i,j}$ provides ``only'' a smooth embedding. In fact,  in the theory of de Jong and van Straten, the moving 
free points also appear as the support of an analytic/algebraic scheme, hence if we repeat their construction over the parameter 
space $\cS_{\overline {m}}$
we could even obtain that the sections  $s_{i,j}$ are complex analytic, or algebraic. However, we will avoid this technical part, and stay with the smooth
case which is still perfectly enough for smooth local triviality of the obtained fibrations and smooth  identification of the Milnor fiber.

Hence, in this case,  $W' \times B^\epsilon$ is a smooth complex analytic space and $\cup_{i,j} s_{i,j}(W')$ are  smooth 
$C^\infty$  (disjoint) subspaces of real  codimension four. 
We blow up  $W' \times B^\epsilon$ along  $\cup_is_i(W') $ (analytically) as in the previous subsection, and also along 
 $\cup_{i,j}s_{i,j}(W') $ smoothly. (Recall that the blow up of a complex surface $S$ can be performed at a point in the smooth category as well,  e.g. as 
  a smooth surgery, resulting in $S\#\overline{\CC\PPP^2}$. This can be done in smooth families as well.)
  Let us denote the total space of this additional blowup by $\BB_b(W')$ instead of the previous $\BB(W')$, and adopt all the previous  notation. 
  Then, by an identical argument, we get that 
  \begin{equation}\label{eq:final}
\widetilde{q}_b\,:\,   (\BB_b(W'), \partial \BB_b(W'), \widetilde{E}_b, \partial \BB_b(W')\cap 
\widetilde{St}_b(W'), \widetilde{E}_b\cap \widetilde{St}_b(W'))\to W' \end{equation}
is a $C^\infty$ (locally) trivial fibration of 5--tuples of spaces over $W'$.

\subsection{The Milnor fibers of picture deformations.} We are ready to prove finiteness of the set of Milnor fillings of the link of a sandwiched singularity $(X, 0)$. We fix a decorated germ $(C, l)$ that corresponds to $(X, 0)$ as in subsection~\ref{subsect:smoothings}, and consider the space of curves $\C(\Gamma)$ as in \ref{subsect:space-of-curves}. 
Fix a stratum $\cS_{\overline{m}} \neq \C(\Gamma)$ whose closure intersects $\C(\Gamma)$. (Again, we consider Case (2) of subsection \ref{subsect:loctriv}; Case (1) is similar but easier.) Suppose that $(C_t, l_t)$ is a picture deformation of a curve $C_0 \in \C(\Gamma)$, such that 
$C_t \in  \cS_{\overline{m}}$. If we fix $\epsilon>0$, the ball $B^\epsilon$ centered at the singular point $p_{C_0}$ of $C_0$,  and $W_0= W_{C_0}$ as in \ref{subsect:loctriv}, then 
$C_t \in W_0 \cap  \cS_{\overline{m}}$  for small $t$, so that all 
the singular points of $C_t$ lie in $B^\epsilon$, and $C_t$ is transverse to $\d B^\epsilon$. Applying Proposition~\ref{prop:djvs} 
to $(B^\epsilon, C_t \cap B^\epsilon)$ with the decoration $l_t$,
we get the Milnor fiber of the smoothing corresponding to $(C_t, l_t)$.
This Milnor fiber, associated to the picture deformation  $C_t$ 
 and the free decorations $\{a_{t,j}\}_j$, can  be constructed from the fibration $\widetilde{q}_b$ of \eqref{eq:final}, assuming that 
$C_t\in W'$:  we delete $\widetilde{St}_b(W') \cap (\widetilde{q}_b)^{-1}(C_t)$ (or its small tubular neighborhood) from  $(\widetilde{q}_b)^{-1}(C_t)$. The (local) triviality of \eqref{eq:final} implies that 
if we apply the same construction in the fiber $(\widetilde{q}_b)^{-1}(C_{t'})$ where $C_{t'}\in W'$ for a decorated curve $(C_{t'}, l_{t'})$, then the resulting 4-manifold with boundary $M_{(C_{t'}, l_{t'})}$ will be diffeomorphic to the Milnor fiber associated to  $(C_t, l_t)$. Here, we need to point out a technicality:  $M_{(C_{t'}, l_{t'})}$ is constructed by applying the procedure of Proposition~\ref{prop:djvs} 
to $(B^\epsilon, C_{t'} \cap B^\epsilon)$, where the ball $B^\epsilon = B^\epsilon(p_{C_0})$ depends on the curve $C_0$. Strictly speaking, Proposition~\ref{prop:djvs} says that if $(C_{t'}, l_{t'})$ is a picture deformation of 
another germ $(C_0', l')$, then we need to choose a Milnor ball $B^{\epsilon'}(p_{C'_0})$, and apply the procedure to this ball (assuming that $t'$ is small enough so that all singularities of $C_{t'}$ are contained in  $B^{\epsilon'}(p_{C'_0})$, and $C_{t'}$ is transverse to the boundary of this ball). However, since the same assumption is satisfied for $B^\epsilon(p_{C_0})$ and $B^{\epsilon'}(p_{C'_0})$, it is easy to see that the resulting 4-manifolds are diffeomorphic (use an interpolating family of balls satisfying the same condition). 
Therefore, $M_{(C_{t'}, l_{t'})}$ is indeed diffeomorphic to the Milnor fiber associated to the picture deformation $(C_{t'}, l_{t'})$  of  $(C_0', l')$. 

We can now finish the proof as follows. For each $\bar{C} \in \C(\Gamma)$, we choose an open set $W_{\bar{C}}$ as in subsection \ref{subsect:loctriv}, then $\mathcal{N=}
\cup_{\bar{C} \in \C(\Gamma)} W_{\bar{C}}$ is an open tubular neighborhood 
 of 
$\C(\Gamma)$.  For a fixed 1-parameter picture deformation $(C_t, l_t)$, all values of $t$ produce smoothings with diffeomorphic Milnor fibers, so we know that every Milnor fiber has a representative that comes from a curve in some connected component of $\cS_{\overline{m}}\cap\mathcal{N}$, for some stratum $\cS_{\overline{m}}$, such that the closure of this component intersects $\C(\Gamma)$.
Taking a smaller $\mathcal{N}$ of the same form, we can assume the closure of each component of $\cS_{\overline{m}}\cap\mathcal{N}$ intersects $\C(\Gamma)$, for all the strata 
$\cS_{\overline{m}}$.  For each fixed stratum $\cS_{\overline{m}} \neq \C(\Gamma)$ and two  decorated curves $(C_t, l_t)$ and  $(C_{t'}, l_{t'})$ that lie in the same connected component $\cS_{\overline{m}}\cap \mathcal{N}$,  the Milnor fillings 
that correspond to $(C_t, l_t)$ and  $(C_{t'}, l_{t'})$
are diffeomorphic by a local triviality argument as above.
Now, we observe that for each $S_{\overline{m}}$ there are only finitely many
connected components of  $S_{\overline{m}} \cap \mathcal{N}$. 
Indeed, since every ${\mathcal S}_ {\overline{m}}$
is quasiprojective, it has finitely many irreducible (or connected)
components. Assume that
  ${\mathcal S}_ {\overline{m}}\not= {\mathcal C}(\Gamma)$. 
  Let  ${\mathcal S}'_ {\overline{m}}$ be one of the irreducible
  components of  ${\mathcal S}_ {\overline{m}}$.
  Write
  $X= \overline {{\mathcal S}'_ {\overline{m}}} \cap {\mathcal C}(\Gamma) $.
  It is a quasiprojective space, let $X=\cup_i X_i$ be its irreducible decomposition
  (which is finite).
  Set $ \dim X_i= d_i$ and intersect $X_i$  with a generic $d_i$-codimensional linear germ (cut) $L_i$ and write
  $L_i\cap X_i=p_i$. Then the union of germs $ (\overline {{\mathcal S}'_ {\overline{m}}}\cap L_i,p_i) $ intersects each irreducible component of
  $ {\mathcal S}_ {\overline{m}}\cap {\mathcal N} $ (for ${\mathcal N}$ small). 
  But each germ $ (\overline {{\mathcal S}'_ {\overline{m}}}\cap L_i,p_i) $
  has finitely many analytic local irreducible components, because the local analytic ring
  ${\mathcal O}_{{\mathcal C},p_i} $ is Noetherian.
 Now, since there are only finitely many possible $\overline{m}$ types and therefore finitely many strata 
 $\{S_{\overline{m}}\}_{\overline{m}}$, we conclude that the number of possible Milnor fibers (up to diffeomorphism) is finite.

\bigskip
\bibliography{references1}
\bibliographystyle{plain}

\end{document}